\newtheorem{theorem}{Theorem}[section]
\newtheorem{lemma}[theorem]{Lemma}
\newtheorem{proposition}[theorem]{Proposition}
\newtheorem{corollary}[theorem]{Corollary}
\newtheorem{definition}[theorem]{Definition}
\newcommand{\Sob}{\!\!\vbox{\hbox{\,\,\,\,\tiny $\circ$}\vspace*{-1.0ex} \hbox{ $H^{1}$}\vspace*{-0.0ex}}}
\newcommand{\Wob}{\!\!\vbox{\hbox{\,\,\,\,\tiny $\circ$}\vspace*{-1.0ex} \hbox{ $W^{1}_p$}\vspace*{-0.8ex}}}
\newcommand{\Wobm}{\!\!\vbox{\hbox{\,\,\,\,\tiny $\circ$}\vspace*{-1.0ex} \hbox{ $W^{m}_p$}\vspace*{-0.8ex}}}
\newcommand{\SobSm}{\!\!\vbox{\hbox{\,\,\,\tiny $\circ$}\vspace*{-2.0ex} \hbox{  \scriptsize $H^{1}$}\vspace*{-0.0ex}}}
\newcommand{\WobSm}{\!\!\vbox{\hbox{\,\,\,\tiny $\circ$}\vspace*{-2.0ex} \hbox{  \scriptsize $W^{1}_p$}\vspace*{-0.6ex}}}
\newcommand{\WobSmd}{\!\!\vbox{\hbox{\,\,\,\tiny $\circ$}\vspace*{-2.0ex} \hbox{  \scriptsize $W^{2}_p$}\vspace*{-0.6ex}}}
\newcommand{\WobSmi}{\!\!\vbox{\hbox{\,\,\,\tiny $\circ$}\vspace*{-2.0ex} \hbox{  \scriptsize $W^{1}_\infty$}\vspace*{-0.4ex}}}
\newcommand{\SobSl}{\!\!\vbox{\hbox{\,\,\,\tiny $\circ$}\vspace*{-1.6ex} \hbox{  \scriptsize $H^{1}$}\vspace*{-0.0ex}}}
\newcommand{\WobSl}{\!\!\vbox{\hbox{\,\,\,\tiny $\circ$}\vspace*{-1.6ex} \hbox{  \scriptsize $W^{1}_p$}\vspace*{-0.6ex}}}
\newcommand{\Sobm}{\!\!\vbox{\hbox{\,\,\,\,\tiny $\circ$}\vspace*{-1.0ex} \hbox{ $H^{m}$}\vspace*{-0.0ex}}}
\newcommand{\Sobd}{\!\!\vbox{\hbox{\,\,\,\,\tiny $\circ$}\vspace*{-1.0ex} \hbox{ $H^{2}$}\vspace*{-0.0ex}}}
\begin{document}

\begin{center}
\textbf{\large\bf{\sc On the smoothness of solutions to elliptic equations in domains with Hölder boundary.}}
\end{center}
\begin{center}
 \textbf{I.\,V.~Tsylin}
\end{center}

\textbf{Abstract:} The dependence of the smoothness of variational solutions to the first boundary value problems for second order elliptic operators are studied. The results use Sobolev-Slobodetskii and Nikolskii-Besov spaces and their properties. Methods are based on real interpolation technique and generalization of Savar\'{e}-Nirenberg difference quotient technique.

\section{Introduction}

Let $(M, g)$ be a smooth connected compact oriented Riemannian manifold without boundary, $\Omega \subsetneq M$ be a subdomain with a Hölder boundary. The aim of this paper is to study the dependence of the smothness of the variational solutions to the following equation:
\begin{equation}
\label{prob}
\mathcal{A}u = f, \,\,\,\, u \in \Sob(\Omega),
\end{equation}
on the regularity of the right hand-side $f \in H^{-1 + \varepsilon}(M)$, $\varepsilon > 0$.  By definition, the operator $\mathcal{A}$ is generated by the continuous positive bilinear form $\Phi$ defined on $\Sob(\Omega)$, associated with the differential operation $\mathcal{A}'$, which is locally represented as follows:
\begin{equation}
\label{gen_eq}
-\frac{1}{\sqrt{\det g}} \partial_i \left( \sqrt{\det g}\,\, a^{ij} \partial_j u \right) + b^i \partial_i u + c u;
\end{equation}
where $a^{ij}$, $b^{i}$, $c$ are sufficiently regular coefficients.

By ellipticity, if the right hand-side belongs to $L_2(\Omega)$, then the solution of (1) belongs to $H^2_{loc}(\Omega)$. One cannot replace $H^2_{loc}(\Omega)$ with $H^2(\Omega)$ even if the boundary $\partial\Omega$ is Lipschitz continuous \cite{Grisvard}. However, this is possible whenever $\Omega$ is a convex set or $\partial \Omega \in C^{1,1}$ (\cite{Grisvard}, Theorems 2.2.2.3, 3.2.1.2).

Suppose $\Omega \subset \mathbb{R}^d$ is a bounded domain with a Lipschitz boundary, $\mathcal{A} = -\Delta$, and $\tilde{H}^{-1+s}(\Omega)$ is the space of all functions $v\in H^{1+s}(M)$, such that $\mathrm{supp} \,v \subset \bar{\Omega}$. It was shown by Jerison and Kenig (in \cite{Kenig}) that if $f \in H^{-1+s}(\Omega)$, $s \in [0,1/2)$, then the solution $u \in \tilde{H}^{1+s}(\Omega)$. In \cite{Savare98}, G. Savar\'{e} elaborated a new method to generalize this Proposition to the case of Lipschitz coefficients.

\textbf{Theorem\,([11]).}
\textit{Let $\Omega \subset \mathbb{R}^d$ be a bounded domain, $\partial \Omega$ be a Lipschitz continuous boundary, $\mathcal{A}$ be generated by (\ref{gen_eq}), and  $a^{ij} \in C^{0,1}(\bar{\Omega})$ be a symmetric positive definite matrix in $\bar{\Omega}$, $b^i \equiv 0$, $c \equiv 0$. Then, the solution of (\ref{prob}) belongs to $\tilde{H}^{1+s}(\Omega)$, $s \in [0,1/2)$, whenever $f\in H^{-1+s}(\Omega)$.}

In this paper we establish similar results in the situation when both the boundary and the coefficients are Hölder continuous. One of the results is the following (the proof will be given in Section \ref{Results}).

\begin{theorem}
\label{result}
Let $M$ be a $d$--dimensional  $C^{1,1}$--smooth compact Riemannian manifold without boundary, a domain $\Omega \subsetneq M$ be such that $\partial \Omega$ is Hölder continuous of order $\gamma_\Omega in (0,1]$. Moreover, let $\mathcal{A}$ be generated by (\ref{gen_eq}), and for some $\varepsilon > 0$ the coefficients $a^{ij}$ and $b^i$ define a symmetric positive definite $C^{0,\gamma_c}(M)$-smooth section of $T^2 M$, $L_{\frac{d+\varepsilon}{1-\gamma_c}}(\Omega)$-section $TM$ respectively, $c \in W^{-1 + \gamma_c + \epsilon}_d(\Omega)$ with $0 < \gamma_c \leq 1$, ($c\in L_{\max\{d, 2+ \epsilon\}}(\Omega)$ if $\gamma_c = 1$) and the form $\Phi$ be positive in $\Sob(\Omega)$. Then the operator 
$$
\mathcal{R}: H^{-1+s}(M) \rightarrow \tilde{H}^{1 + \gamma_\Omega s}(\Omega), \,\,\,\, s\in [0,\gamma_c /2).
$$
solving problem (\ref{prob}) is continuous.
\end{theorem}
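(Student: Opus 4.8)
The plan is to derive the continuity of $\mathcal{R}$ from two a~priori bounds — the trivial $H^{-1}(M)\to\Sob(\Omega)$ estimate and a sharp endpoint estimate into a Nikolskii--Besov space for $s$ up to $\gamma_c/2$ — and then to fill the interior of the scale by real interpolation. First, $\mathcal{R}$ is well defined and bounded as a map $H^{-1}(M)\to\Sob(\Omega)\hookrightarrow\tilde H^{1}(\Omega)$ by the well-posedness of the variational problem; this settles $s=0$. Since $\partial\Omega$ is locally the graph $\{x_d>\varphi(x')\}$ of a function $\varphi\in C^{0,\gamma_\Omega}$ and $M$ is $C^{1,1}$, every space that occurs has coordinate-invariant meaning (its smoothness index lies in $(-1,\tfrac32)$), so from now on I would work in a finite atlas with a subordinate partition of unity, treating interior charts and boundary charts separately.

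Second, I would reduce to the principal part by a short bootstrap. Writing $\mathcal A_0 v:=-\tfrac1{\sqrt{\det g}}\partial_i(\sqrt{\det g}\,a^{ij}\partial_j v)$, the solution obeys $\mathcal A_0 u=f-b^i\partial_i u-cu$; the hypotheses $b^i\in L_{(d+\varepsilon)/(1-\gamma_c)}(\Omega)$ and $c\in W^{-1+\gamma_c+\epsilon}_d(\Omega)$ — precisely the borderline integrability compatible with $s<\gamma_c/2$ — combined with Sobolev embeddings and multiplier/duality estimates in Sobolev--Slobodetskii spaces, show that $b^i\partial_i u+cu$ becomes strictly less singular than $H^{-1+s}$ as soon as a little regularity on $u$ is in hand, with norm controlled by $\|f\|_{H^{-1+s}}$ and the current regularity of $u$. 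Iterating finitely many times, it therefore suffices to prove, for $\mathcal A_0 v=h$ and $s\in[0,\gamma_c/2)$, the endpoint estimate $\|v\|_{\tilde B^{1+\gamma_\Omega s}_{2,\infty}(\Omega)}\lesssim\|h\|_{H^{-1+s}(M)}+\|v\|_{\tilde H^1(\Omega)}$.

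Third — the core — I would run a Savar\'e--Nirenberg difference-quotient argument adapted to the Hölder graph. In a boundary chart the maps $T_\tau v(x):=v\big(x'+\tau,\ x_d+C_\varphi|\tau|^{\gamma_\Omega}\big)$, where $C_\varphi$ is a Hölder constant of $\varphi$, carry $\{x_d>\varphi(x')\}$ into itself, hence — after multiplication by a cutoff — carry $\Sob$ into $\Sob$ and respect the support condition. Testing the weak form of $\mathcal A_0 v=h$ against competitors built from iterated and reflected $T_\tau$-differences times cutoffs, and estimating the commutator $[\mathcal A_0,T_\tau]v=\partial_i\big((a^{ij}-T_\tau a^{ij})\partial_j T_\tau v\big)$ — an $H^{-1}$-datum of size $\lesssim|\tau|^{\gamma_c}\|v\|_{H^1}$ by the $C^{0,\gamma_c}$ bound on $a^{ij}$ — one obtains a modulus-of-continuity estimate for $\nabla v$ along the $T_\tau$-directions; recovering the remaining direction from the equation, and recording that the effective displacement of $T_\tau$ has size $\asymp|\tau|^{\gamma_\Omega}$, this yields $v\in B^{1+\gamma_\Omega s}_{2,\infty}$ near $\partial\Omega$, with the ceiling $s<\gamma_c/2$ forced jointly by the coefficient regularity and the $\tfrac12$-barrier intrinsic to the technique (as in \cite{Kenig,Savare98}). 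Interior charts are treated the same way with genuine translations; patching and extending by zero gives the bound $\mathcal R\colon H^{-1+s}(M)\to\tilde B^{1+\gamma_\Omega s}_{2,\infty}(\Omega)$.

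Finally, real interpolation closes the argument: for $0<s<s'<\gamma_c/2$ and $\theta:=s/s'$ one has $(H^{-1}(M),H^{-1+s'}(M))_{\theta,2}=H^{-1+s}(M)$ and, by the interpolation properties of the support-restricted spaces over a Hölder domain established above, $(\tilde H^1(\Omega),\tilde B^{1+\gamma_\Omega s'}_{2,\infty}(\Omega))_{\theta,2}=\tilde B^{1+\gamma_\Omega s}_{2,2}(\Omega)=\tilde H^{1+\gamma_\Omega s}(\Omega)$; interpolating the two bounds for the linear operator $\mathcal R$ gives its continuity $H^{-1+s}(M)\to\tilde H^{1+\gamma_\Omega s}(\Omega)$ for every $s\in[0,\gamma_c/2)$. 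I expect the third step to be the real obstacle: keeping the difference-quotient estimate sharp when the domain-preserving shifts move a distance $\asymp|\tau|^{\gamma_\Omega}$ instead of $\asymp|\tau|$, so that the cutoff gradients (which are singular near $\partial\Omega$) and the coefficient commutators are absorbed with exactly the exponent $\gamma_\Omega s$ and no loss.
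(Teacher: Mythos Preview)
Your outline is close to the paper's strategy, but step~3 contains a genuine gap that costs exactly a factor of two in the output exponent. A single pass of the Savar\'e energy estimate gives, for the admissible shift $T_\tau$ whose displacement has size $|\eta|\asymp|\tau|^{\gamma_\Omega}$,
\[
\|\psi(v-T_\tau v)\|_{H^1}^2 \;\lesssim\; |\tau|^{\gamma_\Omega\gamma_c}\|v\|_{H^1}^2 \;+\; \bigl|\langle h,\psi(v-T_\tau v)\rangle\bigr| \;+\;\cdots,
\]
and the right-hand side pairing, for $h\in H^{-1+s}$, is only $\lesssim|\tau|^{\gamma_\Omega s}\|h\|_{H^{-1+s}}\|v\|_{H^1}$ (via $\|\psi(v-T_\tau v)\|_{H^{1-s}}\lesssim|\eta|^{s}\|v\|_{H^1}$). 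Taking square roots you obtain $v\in \tilde N^{1+\gamma_\Omega s/2}_2(\Omega)$, \emph{not} $\tilde B^{1+\gamma_\Omega s}_{2,\infty}(\Omega)$. Your ``$\tfrac12$-barrier'' does not merely cap $s$ at $\gamma_c/2$; it halves the exponent you land in. (Incidentally, the commutator size is $\lesssim|\tau|^{\gamma_\Omega\gamma_c}$, not $|\tau|^{\gamma_c}$, because the physical displacement is $|\eta|\asymp|\tau|^{\gamma_\Omega}$.)

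The paper recovers this lost factor by exploiting the \emph{multiplicative} structure of the energy estimate rather than a direct linear bound. One feeds in $f$ at the top of the scale, namely $f\in\bigl[\tilde N^{1-\gamma_c}_{2,0}(\Omega)\bigr]'=B^{-1+\gamma_c}_{2,1}$, so that the form $\tau(f,\cdot)$ is H\"older of order $\gamma_c$ (Corollary~\ref{right_side}); the basic estimate (Theorem~\ref{base}) then reads
\[
\|u\|^2_{\tilde N_2^{1+\gamma_\Omega\gamma_c/2}(\Omega)}\;\lesssim\;\|u\|_{\SobSm(\Omega)}\Bigl(\|u\|_{\SobSm(\Omega)}+\|f\|_{[\tilde N^{1-\gamma_c}_{2,0}(\Omega)]'}\Bigr)\;\lesssim\;\|f\|_{H^{-1}(\Omega)}\,\|f\|_{[\tilde N^{1-\gamma_c}_{2,0}(\Omega)]'}.
\]
This product bound, combined with Proposition~\ref{ext} (if $\|\mathcal T e\|_F\le L\|e\|_{E_0}^{1/2}\|e\|_{E_1}^{1/2}$ then $\mathcal T$ extends boundedly to $(E_0,E_1)_{1/2,1}$), yields $\mathcal R:(H^{-1},[\tilde N^{1-\gamma_c}_{2,0}]')_{1/2,1}\to \tilde N_2^{1+\gamma_\Omega\gamma_c/2}$; the left side is essentially $B^{-1+\gamma_c/2}_{2,1}(M)$, so no factor of two is lost. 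Only \emph{then} does one interpolate with $\mathcal R:H^{-1}\to H^1$, on $M$ rather than on $\Omega$, to get $H^{-1+s}(M)\to\tilde H^{1+\gamma_\Omega s}(\Omega)$ for $s\in[0,\gamma_c/2)$. Without this quadratic-to-linear trick (or the iteration of Theorem~\ref{th52}, which achieves the same limit as $n\to\infty$), your step~3 cannot reach the claimed endpoint.

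A secondary difference: the paper does \emph{not} reduce to the principal part. The lower-order terms are kept inside the Savar\'e estimate and handled by showing directly that the linear form $v\mapsto\Phi_r(u,v)$ is H\"older of order $\gamma_c$ in the sense of Definition~\ref{semi} (Lemma~\ref{cond_coef}); the hypotheses $\textbf{b}\in L_{(d+\varepsilon)/(1-\gamma_c)}$ and $c\in W^{-1+\gamma_c+\epsilon}_d$ are tailored precisely so that $|\Phi_r(u,\psi(w-w_h))|\lesssim|h|^{\gamma_c}\|u\|_{\SobSm}\|w\|_{\SobSm}$. Your bootstrap could be made to work, but it adds a layer that the paper's organisation avoids. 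Finally, your plan to ``recover the remaining direction from the equation'' is unnecessary and in fact unavailable at this regularity level; the paper instead uses \emph{two} admissible shifts, $\varphi_1(h)=\phi(h)\xi$ (pure normal) and $\varphi_2(h)=h+\phi(h)\xi$, and the triangle inequality $\|\psi(u-u_h)\|\le\|\psi(u-u_{\varphi_1})\|+\|\psi(u_{\varphi_1}-u_h)\|$ to cover every direction $h$.
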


Our method is based on ideas in \cite{Savare02} and \cite{StepinTsylin}.

\section{Terms and сoncepts}
\subsection{Domain $\Omega$}
\label{Boundary}

Further assume that $(M,g)$ is a $C^{1,1}$--smooth connected oriented compact manifold without boundary and that every coordinate mapping acts to $\mathbb{R}^d$ with fixed Euclidean norm $|\cdot|$.

\begin{definition}
\label{boundary}
A non-empty open set $\Omega \subset M$ is called a domain with a Hölder continuous boundary of order $\gamma_\Omega$ ($\partial \Omega \in C^{0,\gamma_\Omega}$) if there is an atlas\footnote{Here $V$ is an open subset of $M$ and $\kappa_V: V \to \tilde{V} \subset \mathbb{R}$ is a diffeomorphism} $\mathcal{V} = \{ (V, \kappa_V)\}$ such that for any $(V, \kappa_V) \in\mathcal{V}$ there eixst a unit vecot $\xi_V \in \mathbb{R}^d$ and a function $g_V: \xi_V^{\bot} \to \mathbb{R}$ such that $g_V \in C^{0,\gamma_\Omega}(\xi_V^{\bot})$ with $0 < \gamma_\Omega \leq 1$, $\kappa_V(V\cap \partial \Omega)$ is a subset of the graph of $g_v$ and the intersection of $\kappa_V (V \cap \Omega)$ with the epigraph of $g_V$ is empty.
\end{definition}

\subsection{Operator $\mathcal{A}$}

Let us suppose that $a^{ij}$ and $b^i$ in (\ref{gen_eq}) define a symmetric positive definite $C^{0,\gamma_c}(M)$-smooth section $\textbf{A}$ of $T^2 M$, and $L_{\frac{d}{1-\gamma_c}}(\Omega)$-section $\textbf{b}$ of $TM$ respectively. We shall denote by $\textbf{G}$ the section of $T^2 M$ generated by the Riemannian structure $g$. Since $\textbf{A}$ and $\textbf{G}$ are dependent on $x\in M$, we denote them as $\textbf{A}_x$ and $\textbf{G}_x$. The following conditions are assumed

\begin{enumerate}
\item[\textbf{A1}] There exists a constant $\alpha > 0$ such that
$$
\forall x \in M\,\, \forall \xi \in T^*_x M \Rightarrow \alpha \textbf{G}_x(\xi, \xi) \leq \textbf{A}_x (\xi, \xi);
$$
\item[\textbf{A2}] Section $\textbf{A}$ belongs to $C^{0,\gamma_c}(M)$, $\gamma_c \in (0,1]$.
\end{enumerate}
We endow $C^{0,\gamma_c}(M)$ by the following norm:
$$
\| \textbf{A} \|_{C^{0,\gamma_c}(M)} \stackrel{\mathrm{def}}{=} \| \textbf{A} \|_{C(M)} + [\textbf{A}]_{C^{0,\gamma_c}(M)},
$$
where $\| \textbf{A} \|_{C(M)} = \max_{x \in M} \max_{\xi \in T^*_x M, \xi \neq 0} \frac{\textbf{A}_x(\xi,\xi)}{\textbf{G}_x(\xi,\xi)}$, $[\textbf{A}]_{C^{0,\gamma_c}(M)} = \sum_{U} \max_{ij} \left[ a^{ij}_U \right]_{C^{0,\gamma_c}(U)}$,
$$
[v]_{C^{0,\gamma_c}(U)} = \sup_{x,y \in U, x\neq y} \frac{|v(x) - v(y)|}{|x-y|}, \,\,\,\, v: U \to \mathbb{R},
$$
Here $\mathcal{U} = \{(U, \kappa_U)\}$ is a fixed finite atlas for $M$, and $a^{ij}_U$ are coordinates of $\textbf{A}$ with respect to the maps $\kappa_U$. It can easily be proved that the convergence in $C^{0,\gamma_c}(M)$ is independent of $\mathcal{U} = \{(U, \kappa_U)\}$.

Suppose that
$$
(u,v)_{\SobSl(\Omega)} = \int_{\Omega} \textbf{G}(\nabla u, \nabla v) d\mu, \,\,\,\, (u,v)_{L_2(\Omega)} = \int_\Omega uv d\mu,
$$
are inner products in $\Sob(\Omega)$, $L_2(\Omega)$ respectively, and that the measure $\mu$ is associated with the  Riemannian structure $g$. Let for $1\leq p \leq \infty$
\begin{align*}
&\| u \|_{L_p(\Omega)} = \left(\int_{\Omega} |u|^p d\mu \right)^{1/p},\,\,\,\,\, &\| u \|_{\WobSm(\Omega)} = \left( \int_\Omega \left| \textbf{G}(\nabla u, \nabla u) \right|^{p/2} d\mu \right)^{1/p},\,\,\,\,\,\, &p \in [1,\infty) \\
&\| u \|_{L_\infty(\Omega)} = \mathrm{ess}\sup_{x\in \Omega} |u(x)|,\,\,\,\,\, &\| u \|_{\WobSmi(\Omega)} = \mathrm{ess}\sup_{x\in \Omega} \left| \textbf{G}_x (\nabla u, \nabla u) \right|^{1/2}. \,\,\,\,\,\,&
\end{align*}
Further denote
$$
\| v \|_{W^{-1}_q(\Omega)} = \sup_{u \in \WobSl(\Omega), u \neq 0} \frac{|v(u)|}{\| u \|_{\WobSl(\Omega)}}, p \in (1,\infty), 1/p + 1/q = 1.
$$

\begin{enumerate}
\item[\textbf{A3}] Assume that  $b^i$, $c$ are such that the form $\Phi$ is positive and continuous in $\Sob(\Omega)$.
\end{enumerate}

Consider the following representation of $\Phi$:
\begin{align*}
\Phi (u,v) &= \Phi_0 (u,v) + \Phi_r(u,v),\\
\Phi_0 (u,v) = \int_M \textbf{A}(\nabla u, \nabla v) d\mu, \,\,\,\, &\Phi_r(u,v) = \int
_\Omega \textbf{b}(\nabla u) v d\mu + \int_\Omega c u v d\mu.
\end{align*}
We consider $\int_\Omega c u v d\mu$ as the action of the functional $c$ on the product $uv$. Since \textbf{A3} is satisfied, this action is well defined. In the same way, one can define $\tau(f,v) = \int_\Omega fv d\mu$. Then a function $u \in \Sob(\Omega)$ is a weak variational solution to (\ref{prob}) with $f \in H^{-1}(\Omega)$ if and only if
\begin{equation}
\label{weak_p}
\Phi_0(u,v) + \Phi_r(u,v) = \tau(f,v)\,\,\,\,\, \forall v \in \Sob(\Omega).
\end{equation}
By \textbf{A3} it follows that there exists a unique operator\footnote{Friedrichs extension is meant \cite{Kato}.} $\mathcal{A}$ generated by $\Phi$,  such that $\mathcal{A}u = f$. The operator $\mathcal{A}$ has a bounded inverse operator $\mathcal{R}: H^{-1}(\Omega) \to \Sob(\Omega)$.

In the fourth section we shall impose additional conditions on the coefficients of $\mathcal{A}$ (see (\ref{cond1})--(\ref{cond2})). 

Let $\textbf{g}(\cdot, \cdot)$ be the section of  $T^* M \times T^* M$ associated with the structure $g$ and $C_{emb}$ be the embedding constant of the continuous embedding $\Sob(\Omega) \hookrightarrow L_q(\Omega)$, $1/p + 1/q = 1/2$. Then, \textbf{A3} holds whenever $\textbf{b}\in L_p(\Omega)$,  $c \in W^{-1}_p(\Omega)$, and
$$
C_{emb}(\| \textbf{b} \|_{L_p(\Omega)} + \|c\|_{W^{-1}_p(\Omega)}) < \alpha,
$$
where $\alpha$ is a constant in \textbf{A1} and
$$
\| \textbf{b} \|_{L_\infty(\Omega)} =  \mathrm{ess}\sup_{x \in \Omega} |\textbf{g}(\textbf{b},\textbf{b})|^{1/2}, \,\,\,\, \| \textbf{b} \|_{L_p(\Omega)} = \left( \int_\Omega |\textbf{g}(\textbf{b},\textbf{b})|^{p/2} \right)^{1/p}, \,\,\,\, p \in [1,\infty).
$$

\subsection{Weak smoothness}

Further we shall use the following notion. Let $L$ be a (not necessarily compact) manifold, $E(L)$ be a real Banach space of functions $f: L \to \mathbb{R}$. Denote
$$
\tilde{E}(\Omega) \stackrel{\mathrm{def}}{=} \left\{ u\in E(L) \left|\,\, \mathrm{supp} u \subset \bar{\Omega} \right. \right\}.
$$

\subsubsection{Nikolskii spaces}

Let us denote $v_h(x) = v(x+h)$ for $v: \mathbb{R}^d \to \mathbb{R}$. We need to recall the definition of Nikolskii spaces $N^{k+\gamma}_p(\mathbb{R}^d)$, $\gamma \in (0,1]$, $k \in \mathbb{Z}_+$, $p \in [1, \infty]$:
\begin{align*}
N^{k+\gamma}_p (\mathbb{R}^d) &= \left\{ v \in W^k_p(\mathbb{R}^d) \left| \,\, \| v \|_{N_p^{k+\gamma}(\mathbb{R}^d)} \stackrel{\mathrm{def}}{=} \| v \|_{W^{k}_p(\mathbb{R}^d)} + [v]_{N^{k+\gamma}_p (\mathbb{R}^d)} \right.  \right\},\\
[v]_{N^{k+\gamma}_p (\mathbb{R}^d)} &\stackrel{\mathrm{def}}{=} \left\{ \begin{aligned} \max_{|\alpha| = k} \sup_{h \in \mathbb{R}^d, h \neq 0} \frac{\| \partial^{\alpha} v_h - \partial^\alpha v \|_{L_p(\mathbb{R}^d)}}{|h|^\gamma},\,\,\,\, \gamma \in (0,1)\\ \max_{|\alpha| = k} \sup_{h \in \mathbb{R}^d, h \neq 0} \frac{\| \partial^{\alpha} v_{2h} - 2 \partial^\alpha v_h + \partial^\alpha v \|_{L_p(\mathbb{R}^d)}}{|h|},\,\,\,\, \gamma = 1 \end{aligned} \right..
\end{align*}
Here $\alpha = (\alpha_1, \ldots ,\alpha_d) \in \mathbb{Z}_+^d$ are multi-indices, $\partial^\alpha = \partial^{\alpha_1}_{x_1} \cdots \partial^{\alpha_d}_{x_d}$, $|\alpha| = \sum_{i=1}^{d} \alpha_i$. Assume that $N^{k+\gamma}_{p,0}(\mathbb{R}^d) \subset N^{k+\gamma}_{p}(\mathbb{R}^d)$, $\gamma \in (0,1)$, is the space of all functions satisfying the condition\footnote{In  \cite{Muramatu}, it is denoted by $B^{k+\gamma}_{p,\infty-}(\mathbb{R}^d)$ (see Proposition \ref{duality})}
$$
\lim_{|h|\to 0, \infty} \max_{|\alpha| = k} \frac{\| \partial^{\alpha} v_h - \partial^\alpha v \|_{L_p(\mathbb{R}^d)}}{|h|^\gamma} = 0.
$$

Let $\mathcal{U} = \{(U,\kappa_U)\}$ be a certain fixed finite atlas of $M$ and $\{\psi_{U}\}$ be a corresponding smooth partition of unity. We define $N^{k+\gamma}_p(M)$ (or $u \in  N^{k+\gamma}_{p,0}(M)$) as the space of all functions $u\in L_p(M)$ such that for any $(U, \kappa_U)\in\mathcal{U}$ the product $\psi_U \cdot u$ belongs to $\tilde{N}^{k+\gamma}_p(U)$ (or $\psi_U \cdot u \in \tilde{N}^{k+\gamma}_{p,0}(U)$), and
$$
\| u \|_{N^{k+\gamma}_p (M)} = \sum_{U } \| \psi_U u \|_{\tilde{N}^{k+\gamma}_p(U)}, \,\,\,\,\, \| u \|_{N^{k+\gamma}_{p,0} (M)} = \sum_{U } \| \psi_U u \|_{\tilde{N}^{k+\gamma}_{p,0}(U)}.
$$
By Lemma 4.2 in \cite{Nikolskii53} it follows that the definition of the spaces $N^{k+\gamma}_p(M)$ is independent of atlas and partition of unity. 

Let us consider  a compact metric space $(X,\delta)$. For sets $A, B \subset X$ let
$$
\mathrm{dist}(A,B) \stackrel{\mathrm{def}}{=} \inf_{x\in A, y \in B} \delta(x,y).
$$

\begin{proposition}
\label{Brezis}
For any open set $\Omega \subset M$ the following embedding holds
$$
\Wobm(\Omega) \hookrightarrow \tilde{N}^m_p(\Omega), \,\,\,\, m=1,2, \,\,\,\, p \in [1,\infty].
$$
Moreover, for any function $v \in \Wob(\Omega)$, any map $(U, \kappa_U)$ of $M$, and any open set $V\Subset \Omega \cap U$ the following implication holds
\begin{align*}
\forall \varphi \in C(U)\, \forall h \in \mathbb{R}^d\!:\! | h | < \mathrm{dist}(V, \partial (\Omega \cap U)) \!\Rightarrow\! \| \varphi (v_h - v)\|_{L_p(V)} &\leq \| \varphi \|_{C(\bar{V})} C_V \| v \|_{\WobSm(\Omega)} |h|,\\
\| \varphi (v_h - v)\|_{\WobSm(V)} &\leq \| \varphi \|_{C(\bar{V})} \hat{C}_V \| v \|_{\WobSmd(\Omega)} |h|.
\end{align*}
Here $\mathrm{dist}$ is computed with respect to the metric $|\cdot|$ in $\kappa_U(\bar{U})$, and partial difference is defined with respect to the linear structure in the image of $\kappa_U$.
\end{proposition}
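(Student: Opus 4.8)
The plan is to reduce the whole statement to the classical $L_p$ difference-quotient estimate on $\mathbb{R}^d$ and then transport it to $M$ through charts and the partition of unity $\{\psi_U\}$ fixed in the definition of $N^m_p(M)$. The elementary fact I will use throughout is: for $w\in C_c^\infty(\mathbb{R}^d)$ one has $w_h-w=\int_0^1(\nabla w)(\,\cdot\,+th)\cdot h\,dt$, whence by Minkowski's integral inequality $\|w_h-w\|_{L_p(V)}\le|h|\,\|\nabla w\|_{L_p(V_{|h|})}$ for every open $V$, where $V_{|h|}=\{x:\mathrm{dist}(x,V)\le|h|\}$; by density this passes to all $w\in W^1_p(\mathbb{R}^d)$ when $p<\infty$, and for $p=\infty$ it follows directly from the mean value formula for Lipschitz functions. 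Combined with translation invariance of $\|\cdot\|_{L_p(\mathbb{R}^d)}$ this gives $\|w_{2h}-2w_h+w\|_{L_p(\mathbb{R}^d)}\le2\|w_h-w\|_{L_p(\mathbb{R}^d)}\le2|h|\,\|\nabla w\|_{L_p(\mathbb{R}^d)}$, which is exactly what controls the $\gamma=1$ Nikolskii seminorm ($N^1_p$ and $N^2_p$ are of this type, with $k=0$ and $k=1$).

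For the embedding $\Wobm(\Omega)\hookrightarrow\tilde N^m_p(\Omega)$, $m\in\{1,2\}$, I would take $v\in\Wobm(\Omega)$ and a sequence $v_n\in C_c^\infty(\Omega)$ with $v_n\to v$. For each chart $(U,\kappa_U)$ the functions $(\psi_U v_n)\circ\kappa_U^{-1}$, extended by zero outside $\kappa_U(U)$, are smooth, compactly supported in $\mathbb{R}^d$, and converge in $W^m_p(\mathbb{R}^d)$ to the zero-extension of $(\psi_U v)\circ\kappa_U^{-1}$ — this is precisely where membership in $\Wobm(\Omega)$, rather than merely in $W^m_p(\Omega)$, is used, since it is what makes the zero-extension legitimate. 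Applying the estimate above to $w=\partial^\alpha\bigl((\psi_U v)\circ\kappa_U^{-1}\bigr)$ for $|\alpha|=m-1$ yields $\psi_U v\in\tilde N^m_p(U)$ with norm at most $c_U\|v\|_{\Wobm(\Omega)}$, the constant $c_U$ depending only on $\psi_U$, $\kappa_U$ and the atlas; summing over $U$ gives $v\in\tilde N^m_p(\Omega)$ with the asserted norm bound.

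For the two local estimates, fix $v$, a chart $(U,\kappa_U)$ with compact $\bar U$, an open set $V\Subset\Omega\cap U$, $\varphi\in C(U)$, and $h$ with $|h|<\mathrm{dist}(V,\partial(\Omega\cap U))$, all computations being done in $\kappa_U(U)$. The distance condition makes the closed $|h|$-neighbourhood $V_{|h|}$ still lie in $\Omega\cap U$, so the Euclidean estimate applies and $\|\nabla v\|_{L_p(V_{|h|})}\le\|\nabla v\|_{L_p(\Omega\cap U)}$; comparing on the compact set $\bar U$ the Euclidean norm and volume element with those induced by $g$ (bounded ratios, since $g\in C^{1,1}$), and using $\|\varphi(v_h-v)\|_{L_p(V)}\le\|\varphi\|_{C(\bar V)}\|v_h-v\|_{L_p(V)}$, I obtain the first inequality with $C_V$ absorbing all geometric factors. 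The second inequality is the same argument after one differentiation: for $v\in\WobSmd(\Omega)$ the chart derivatives $\partial_i v$ lie in $W^1_p$ and $\nabla(v_h-v)=(\nabla v)_h-\nabla v$, so applying the Euclidean estimate to each $\partial_i v$ gives $\|\nabla(v_h-v)\|_{L_p(V)}\le c\,|h|\,\|v\|_{\WobSmd(\Omega)}$ up to the same metric comparison; re-expressing the Euclidean gradient length through $\textbf{G}$ and multiplying by the bounded weight $\varphi$ produces $\hat C_V$ — here $\|\varphi(v_h-v)\|_{\WobSm(V)}$ is read with $\varphi$ acting as a bounded multiplier on the gradient, in keeping with the absence of a $\|\nabla\varphi\|$-term on the right.

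I do not expect a genuine difficulty: the proof is careful bookkeeping around the classical estimate. The two points deserving attention are (i) the zero-extension step, which is exactly where $v\in\Wobm(\Omega)$ is indispensable and where the analogue for a rough domain would fail, and (ii) making the local constants $C_V$, $\hat C_V$ independent of $v$ and of $h$ — this is why the restriction $|h|<\mathrm{dist}(V,\partial(\Omega\cap U))$ and the precompactness $V\Subset\Omega\cap U$ are imposed in the statement.
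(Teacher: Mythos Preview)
Your proposal is correct and follows essentially the same approach as the paper: both reduce everything to the classical difference-quotient estimate $\|v_h - v\|_{L_p(U)} \le |h|\,\|\nabla v\|_{L_p(U')}$ for $U \Subset U' \subset \mathbb{R}^d$ (Proposition~IX.3 in Brezis, \emph{Analyse fonctionnelle}), which the paper simply cites without further argument. Your write-up is in fact more detailed than the paper's, which contents itself with quoting the Euclidean inequality and leaving the chart-and-partition-of-unity bookkeeping to the reader.
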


The statement above follows by Proposition IX.3 \cite{Brezis}: if $v \in W^1_p(U')$, $p \in [1,\infty]$, $U \Subset U' \subset \mathbb{R}^d$, then for any $h \in \mathbb{R}^d$, $|h| < \mathrm{dist}(U, \partial U')$ holds
$$
\| v_h - v \|_{L_p(U)} \leq |h| \| \nabla v  \|_{L_p(U')}.
$$
Furthermore, if $v \in W^1_p(\mathbb{R}^d)$, then
\begin{equation}
\label{brz}
\| v_h - v \|_{L_p(\mathbb{R}^d)} \leq |h| \| \nabla v  \|_{L_p(\mathbb{R}^d)}.
\end{equation}

\subsubsection{Besov spaces}

Let us recall the definition of Besov spaces (following \cite{Triebel}). Propositions \ref{interpol_1} and \ref{interpol_2} below follow from the similar propositions for domains with Lipschitz boundaries $\Omega \subset \mathbb{R}^d$ and the fact that for any simply--connected bounded domains $V_1, V_2 \subset \mathbb{R}^d$ there is a linear homeomorphism $K:\Sobm(V_2) \to \Sobm(V_1)$, $m=1,2$, $K:L_2(V_2) \to L_2(V_1)$, $K: u \mapsto u \circ K_0$, generated by $C^{1,1}$--diffeomorphism $K_0: V_1 \to V_2$. Here $K_0$ is defined in a larger open set $V \Supset V_1$.

Assume that $F$ is the Fourier transform, $M_0 = \{\xi \in \mathbb{R}^d \,\,| \,\,|\xi| \leq 2 \}$, and $M_j = \{\xi \in \mathbb{R}^d \,\,| \,\, 2^{j-1} \leq |\xi| \leq 2^{j+1} \}$ for $j \in \mathbb{N}$, $\mathcal{S}'$ is the space of distributions of moderate growth.

\begin{definition}
Let us define for $s\in \mathbb{R}$, $p \in (1,\infty)$ the following spaces
\begin{align*}
B^s_{p,q}(\mathbb{R}^d) = &\left\{ f\in \mathcal{S}'(\mathbb{R}^d) \,\,|\,\, f \stackrel{\mathcal{S}'}{=} \sum_{j = 0}^{\infty} a_j(x);\,\, \mathrm{supp} F a_j \subset M_j;\right.\\
&\left. \| \{ a_j \} \|_{l^s_q(L_p)} = \left[ \sum_{j = 0}^{\infty}(2^{sj} \| a_j \|_{L_p(\mathbb{R}^d)})^q \right]^{1/q} < \infty \right\},\,\,\,\,\,\, q \in [1,\infty)
\end{align*}
\begin{align*}
B^s_{p,\infty}(\mathbb{R}^d) = \left\{ f\in \mathcal{S}'(\mathbb{R}^d) \,\,|\,\, f \stackrel{\mathcal{S}'}{=} \sum_{j = 0}^{\infty} a_j(x);\,\, \mathrm{supp} F a_j \subset M_j;\right.\\
\left. \| \{ a_j \} \|_{l^s_\infty(L_p)} = \sup_{j \in \mathbb{Z}_+} 2^{sj}  \| a_j \|_{L_p(\mathbb{R}^d)} < \infty \right\},\,\,\,\,\,\, q = \infty
\end{align*}
with the norms
$$
\| f \|_{B^s_{p,q}(\mathbb{R}^d)} = \inf_{f \stackrel{\mathcal{S}'}{=} \sum_{j = 0}^{\infty} a_j} \| \{ a_j\} \|_{l^s_q(L_p)}.
$$
\end{definition}

The space $B^s_{p,q}(M)$ is defined by a finite atlas $\mathcal{U} = \{(U, \kappa_U)\}$ and subordinate partition of unity $\{\psi_U\}$, $\mathrm{supp} \psi_U \subset U$, in the following way. We suppose $u \in B^s_{p,q}(M)$, if $\psi_U u \in \tilde{B}^s_{p,q}(U)$; and introduce the norm
\begin{equation}
\label{bes_norm}
\| u \|_{B^s_{p,q}(M)} = \sum_{U} \| \psi_U u \|_{\tilde{B}^s_{p,q}(U)}.
\end{equation}

By the interpolation property of $B^s_{p,q}(\mathbb{R}^d)$, one obtains that the norm is independent on $\mathcal{U}$ and $\{\psi_U\}$, up to equivalence.

For $s \in \mathbb{R}_+ \backslash \mathbb{Z}_+$, Nikolskii and Sobolev--Slobodetskii spaces are the special cases of Besov spaces:
\begin{align*}
N^s_p(M) = B^s_{p,\infty}(M), \,\,\,\, \tilde{N}^s_p(\Omega) = \tilde{B}^s_{p,\infty}(\Omega),\\
W^s_p(M) = B^s_{p,p}(M), \,\,\,\, \tilde{W}^s_p(\Omega) = \tilde{B}^s_{p,p}(\Omega),
\end{align*}
and for any $p \in (1,\infty)$, $q \in [1,\infty]$, $s > \epsilon > 0$, the following chain of embeddings iholds
\begin{equation}
\label{chain_emb}
\tilde{B}^{s+\epsilon}_{p,\infty}(\Omega) \hookrightarrow \tilde{B}^{s}_{p,1}(\Omega) \hookrightarrow \tilde{B}^{s}_{p,q}(\Omega) \hookrightarrow \tilde{B}^{s}_{p,\infty}(\Omega) \hookrightarrow \tilde{B}^{s-\epsilon}_{p,1}(\Omega).
\end{equation}

Let us denote $B^s_{p,q}(\Omega) = B^s_{p,q}(M) / \left\{u \in B^s_{p,q}(M) \,\,\left|\,\, u|_\Omega \equiv 0 \right. \right\}$ with the norm
$$
\| v \|_{B^s_{p,q}(\Omega)} \stackrel{\mathrm{def}}{=} \inf_{u|_\Omega = v} \| u \|_{B^s_{p,q}(M)}
$$

Consider $(\cdot, \cdot)_{s,q}$ to be the real interpolation functor.

\begin{proposition}
\label{interpol_1}
For any $s\in (0,1)$, $q \in [1,\infty]$, and domain\footnote{Since $\partial M = \varnothing \in C^{0,1}$, in Propositions \ref{interpol_1}, \ref{interpol_2}, \ref{duality} one can suppose that domain $\Omega'$ coincides with~$M$} $\Omega' \subseteq M$ with a Lipschitz boundary, Besov spaces are the results of the following interpolation procedure:
\begin{align*}
\tilde{B}^{s}_{2,q}(\Omega') = (L_2(\Omega'), \Sob(\Omega'))_{s,q}, \,\,\,\, &\tilde{B}^{1 + s}_{2,q}(\Omega') = (\Sob(\Omega'), \Sobd(\Omega'))_{s,q},\\
B^{-s}_{2,q}(\Omega') = &(L_2(\Omega'), H^{-1}(\Omega'))_{s,q};
\end{align*}
in case of $\mathbb{R}^d$ the similar relations hold
\begin{align*}
B^{s}_{2,q}(\mathbb{R}^d) = (L_2(\mathbb{R}^d), H^1(\mathbb{R}^d))_{s,q}, \,\,\,\, &B^{1 + s}_{2,q}(\mathbb{R}^d) = (H^1(\mathbb{R}^d), H^2(\mathbb{R}^d))_{s,q},\\
B^{-s}_{2,q}(\mathbb{R}^d) = &(L_2(\mathbb{R}^d), H^{-1}(\mathbb{R}^d))_{s,q}.
\end{align*}
\end{proposition}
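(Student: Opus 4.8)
The plan is to establish the six identities first over $\mathbb{R}^d$, where they are classical, and then to transport them to $M$ and to $\Omega'\subseteq M$ by localisation, the $C^{1,1}$-structure entering at a single point. First I would reduce all the Euclidean relations to the one identity $(L_2(\mathbb{R}^d),H^1(\mathbb{R}^d))_{s,q}=B^s_{2,q}(\mathbb{R}^d)$: the Bessel potentials $J_t=(1-\Delta)^{t/2}$ are Fourier-multiplier isomorphisms $H^\sigma(\mathbb{R}^d)\to H^{\sigma-t}(\mathbb{R}^d)$ and $B^\sigma_{2,q}(\mathbb{R}^d)\to B^{\sigma-t}_{2,q}(\mathbb{R}^d)$ for every real $\sigma$, and an isomorphism commutes with the functor $(\cdot,\cdot)_{s,q}$, so applying $J_{\pm1}$ to the couples $(H^1,H^2)$ and $(L_2,H^{-1})$ turns the remaining two Euclidean identities into the first one, with parameters $s$ and $1-s$ respectively. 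The identity $(L_2(\mathbb{R}^d),H^1(\mathbb{R}^d))_{s,q}=B^s_{2,q}(\mathbb{R}^d)$ itself I would obtain by computing the $K$-functional of the couple on the Fourier side and recognising the Littlewood--Paley norm of $B^s_{2,q}(\mathbb{R}^d)$ (\cite{Triebel}).

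Next I would pass to Lipschitz domains $\Omega'\subset\mathbb{R}^d$. For the restriction spaces $B^s_{2,q}(\Omega')$ I would use a universal extension operator $\mathcal{E}$ — Stein's operator already handles $L_2,H^1,H^2$, Rychkov's handles the whole Besov scale at once — together with the restriction $P$; since $P\mathcal{E}=\mathrm{id}$, the pair $(P,\mathcal{E})$ is a retraction--coretraction for the couples $(L_2,H^2)$ and $(L_2,H^{-1})$, and the retraction lemma for the real method carries the previous step over. For the spaces with a boundary condition — $\Sob(\Omega')$, $\Sobd(\Omega')$, $H^{-1}(\Omega')$, $\tilde{B}^s_{2,q}(\Omega')$ — I would invoke the classical Lions--Magenes type interpolation identities for Lipschitz $\Omega'$, namely $(L_2,\Sob)_{s,q}=\tilde{B}^s_{2,q}$, $(\Sob,\Sobd)_{s,q}=\tilde{B}^{1+s}_{2,q}$ and $(L_2,H^{-1})_{s,q}=B^{-s}_{2,q}$ (\cite{Grisvard}, \cite{Triebel}), reading the circle-spaces $\Sob(\Omega'),\Sobd(\Omega')$ as the closures of $C_c^\infty(\Omega')$ so that extension by zero identifies them with the support-constrained spaces $\tilde{H}^1(\Omega'),\tilde{H}^2(\Omega')$, with the usual caveat that at $s=1/2$ the left-hand side is the Lions--Magenes space.

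Finally I would transport everything to $M$ and to $\Omega'\subseteq M$. With $\mathcal{U}=\{(U,\kappa_U)\}$ and $\{\psi_U\}$, $\mathrm{supp}\,\psi_U\Subset U$, the atlas and partition of unity defining the spaces on $M$ and on $\Omega'$, the map $u\mapsto(\psi_U u)_U$ is — by the very definition of those norms — an isometric coretraction of each space into the finite $\ell^1$-sum of the spaces $\tilde{B}^s_{2,q}$ of the sets $\kappa_U(U)$, respectively $\kappa_U(\Omega'\cap U)$, with bounded retraction $(v_U)_U\mapsto\sum_U v_U$; since the real functor commutes with finite sums and with retractions, it is enough to prove the identities on each $\kappa_U(\Omega'\cap U)$, a bounded Lipschitz domain in $\mathbb{R}^d$ because $\partial\Omega'$ is Lipschitz and $\kappa_U$ is a $C^{1,1}$-diffeomorphism — and that is the previous step, once one knows that the coordinate change preserves the relevant spaces. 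This is the one place where $C^{1,1}$-smoothness of $M$ is used: composition with a $C^{1,1}$-diffeomorphism $K_0$ (equivalently, the operator $K\colon u\mapsto u\circ K_0$ from the discussion before the proposition) is bounded on $L_2$, $H^1$ and $H^2$ — in $\partial^2(u\circ K_0)$ the second derivatives of $u$ meet only bounded first derivatives of $K_0$, and the first derivatives of $u$ meet $\partial^2 K_0\in L_\infty$ — hence on $H^{-1}$ by duality, hence, being an isomorphism on the endpoints, on $B^s_{2,q}$ for $|s|\le 2$ and on the attendant support- and trace-constrained spaces. Taking $\Omega'=M$, which is admissible since $\partial M=\varnothing$, settles the $M$-identities as well.

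I expect the obstacle to lie in the last two steps rather than the first. The delicate points are: keeping the boundary-condition bookkeeping correct — using the sharp Lions--Magenes identification, with the right reading of the circle-spaces and the special role of $s=1/2$, rather than the identification (false for $s>1/2$) of $\tilde{B}^{1+s}_{2,q}(\Omega')$ with the trace-zero subspace of $B^{1+s}_{2,q}(\Omega')$, and checking that multiplication by $\psi_U$ and extension by zero respect every support and trace condition at once; and, the hardest single point, the fact that $C^{1,1}$ is precisely the threshold regularity for which the coordinate change still preserves $H^2$, and hence $H^{-1}$ — any loss there would break the transport for $s$ near $\pm2$, and by interpolation for the whole scale. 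Everything else is a routine application of the retraction lemma, once one has verified that the extension operators and the partition-of-unity decomposition are retraction--coretraction pairs for all the couples at once.
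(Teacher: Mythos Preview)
Your proposal is correct and follows essentially the same route the paper sketches just before the definition of Besov spaces: reduce to the known interpolation identities for Lipschitz domains in $\mathbb{R}^d$ (citing \cite{Triebel}, \cite{Grisvard}, \cite{Rychkov}) and transport them to $M$ and to $\Omega'\subseteq M$ via localisation and the $C^{1,1}$ coordinate change $K\colon u\mapsto u\circ K_0$, which is an isomorphism on $L_2$, $\Sob$, $\Sobd$ precisely because $K_0\in C^{1,1}$. The paper compresses this into a single sentence, whereas you spell out the Euclidean step (Bessel potentials, $K$-functional), the retraction--coretraction mechanism, and the boundary-condition bookkeeping --- but the architecture and the one genuinely sharp point (that $C^{1,1}$ is the threshold for preserving $H^2$) are the same.
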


\begin{proposition}
\label{interpol_2}
For any $t, s \in (0,1)$, $0 < s_1 < s_2 < 1$, $q \in [1,\infty]$, and domain $\Omega' \subseteq M$, $\partial \Omega' \in C^{0,1}$, the following relations hold
\begin{align*}
(\Sob(\Omega'), \tilde{B}^{1+s}_{2,q}(\Omega'))_{t,2} = \tilde{H}^{1+ts}(\Omega'),\,\,\,\,\, &(H^{-1}(\Omega'), B^{-1+s}_{2,q}(\Omega'))_{t,2} = \tilde{H}^{1+ts}(\Omega'),\\
(B^{-s_1}_{2,q}(\Omega'), B^{-s_2}_{2,q}(\Omega'))_{t,2} = &H^{-(1-t)s_1 - t s_2}(\Omega');
\end{align*}
and in case of $\mathbb{R}^d$ one has
\begin{align*}
(H^1(\mathbb{R}^d), B^{1+s}_{2,q}(\mathbb{R}^d))_{t,2} = H^{1+ts}(\mathbb{R}^d),\,\,\,\,\, &(H^{-1}(\mathbb{R}^d), B^{-1+s}_{2,q}(\mathbb{R}^d))_{t,2} = H^{1+ts}(\mathbb{R}^d),\\
(B^{-s_1}_{2,q}(\mathbb{R}^d), B^{-s_2}_{2,q}(\mathbb{R}^d))_{t,2} = &H^{-(1-t)s_1 - t s_2}(\mathbb{R}^d).
\end{align*}
\end{proposition}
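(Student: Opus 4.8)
The plan is to deduce all three identities, together with their $\mathbb{R}^d$ counterparts, from one tool: the reiteration (stability) theorem for the real interpolation method. In the form I need it reads: if $\bar A = (A_0,A_1)$ is a compatible couple, $\theta_0,\theta_1\in[0,1]$ with $\theta_0\neq\theta_1$, $\eta\in(0,1)$, $q\in[1,\infty]$, and $X_i$ is of class $\mathcal{C}(\theta_i;\bar A)$ for $i=0,1$, then $(X_0,X_1)_{\eta,q} = \bar A_{\theta,q}$ with $\theta = (1-\eta)\theta_0 + \eta\theta_1$, up to equivalence of norms; in particular the inner parameters drop out and the outer one, here $q=2$, fixes the resulting scale. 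The content of Proposition~\ref{interpol_1} is exactly that every space occurring in the statement lies in one fixed scale: the first line sits over the couple $(\Sob(\Omega'),\Sobd(\Omega'))$ (resp.\ $(H^1(\mathbb{R}^d),H^2(\mathbb{R}^d))$), the second and third over $(L_2(\Omega'),H^{-1}(\Omega'))$ (resp.\ $(L_2(\mathbb{R}^d),H^{-1}(\mathbb{R}^d))$). I would first record the elementary fact that each endpoint of a couple is of class $\mathcal{C}$ at the corresponding endpoint exponent, $A_0\in\mathcal{C}(0;\bar A)$ and $A_1\in\mathcal{C}(1;\bar A)$, which is immediate from $K(\tau,a;\bar A)\le\|a\|_{A_0}$, $J(\tau,a;\bar A)\ge\|a\|_{A_0}$ and the symmetric inequalities; this is what lets the theorem be applied when one of the two arguments is an endpoint, as in the first and second identities.

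For the first identity I would write, by Proposition~\ref{interpol_1}, $\tilde B^{1+s}_{2,q}(\Omega') = (\Sob(\Omega'),\Sobd(\Omega'))_{s,q}$, observe that $\Sob(\Omega')$ is of class $\mathcal{C}(0)$ over $(\Sob(\Omega'),\Sobd(\Omega'))$, and apply the stability theorem with $\theta_0 = 0$, $\theta_1 = s$, $\eta = t$, $q = 2$:
$$
(\Sob(\Omega'),\tilde B^{1+s}_{2,q}(\Omega'))_{t,2} = (\Sob(\Omega'),\Sobd(\Omega'))_{ts,2} = \tilde B^{1+ts}_{2,2}(\Omega') = \tilde H^{1+ts}(\Omega'),
$$
the last equality being the identification $\tilde B^{\sigma}_{2,2} = \tilde H^{\sigma}$ for non-integer $\sigma$ recorded in Section~2 (applicable since $ts\in(0,1)$). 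The Euclidean case is the same computation over $(H^1(\mathbb{R}^d),H^2(\mathbb{R}^d))$.

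For the third identity I would use that $B^{-s_i}_{2,q}(\Omega') = (L_2(\Omega'),H^{-1}(\Omega'))_{s_i,q}$ is of class $\mathcal{C}(s_i)$ (Proposition~\ref{interpol_1}); since $s_1\neq s_2$, the stability theorem gives $(B^{-s_1}_{2,q}(\Omega'),B^{-s_2}_{2,q}(\Omega'))_{t,2} = (L_2(\Omega'),H^{-1}(\Omega'))_{(1-t)s_1 + ts_2,\,2} = H^{-(1-t)s_1 - ts_2}(\Omega')$, the exponent lying in $(0,1)$ so the Besov identification again applies. For the second identity, $H^{-1}(\Omega')$ is the right endpoint of $(L_2(\Omega'),H^{-1}(\Omega'))$, hence of class $\mathcal{C}(1)$, while $B^{-1+s}_{2,q}(\Omega') = (L_2(\Omega'),H^{-1}(\Omega'))_{1-s,q}$ is of class $\mathcal{C}(1-s)$; the stability theorem then yields $(H^{-1}(\Omega'),B^{-1+s}_{2,q}(\Omega'))_{t,2} = (L_2(\Omega'),H^{-1}(\Omega'))_{1-ts,\,2} = H^{-1+ts}(\Omega')$, which is the space on the right-hand side after the duality identification $H^{-\tau}(\Omega') = (\tilde H^{\tau}(\Omega'))^{*}$ together with compatibility of the real interpolation functor with duality for the reflexive couple $(L_2,H^{-1})$, whose intersection $L_2(\Omega')$ is dense in each component. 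The Euclidean statements follow verbatim.

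I expect the only delicate points to be bookkeeping rather than conceptual: verifying the class-$\mathcal{C}(0)$ and class-$\mathcal{C}(1)$ properties of the endpoints so that the stability theorem is invoked legitimately, and, in the negative-order lines, checking that the duality pairings identifying the ``$\tilde{\ }$''-spaces with the quotient spaces $B^{\sigma}_{2,q}(\Omega')$ are precisely those under which the identities of Proposition~\ref{interpol_1} are stated. Both reduce to unwinding the definitions of Section~2.
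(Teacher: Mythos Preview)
Your argument via the reiteration theorem is correct and is the standard route to these identities. Note, however, that the paper does not actually supply a proof of Proposition~\ref{interpol_2}: the sentence preceding the definition of Besov spaces states that Propositions~\ref{interpol_1} and~\ref{interpol_2} ``follow from the similar propositions for domains with Lipschitz boundaries $\Omega\subset\mathbb{R}^d$'' (i.e.\ standard material from \cite{Triebel}) together with transfer to the manifold via composition with $C^{1,1}$ diffeomorphisms. So there is no ``paper's own proof'' to compare against; your reiteration argument is precisely how one establishes the Euclidean versions that the paper is citing, and the passage to $\Omega'\subset M$ then goes through the chart-by-chart reduction the paper describes.

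One small point: in the second identity your computation correctly gives $(H^{-1}(\Omega'),B^{-1+s}_{2,q}(\Omega'))_{t,2}=H^{-1+ts}(\Omega')$, but the displayed right-hand side in the statement reads $\tilde H^{1+ts}(\Omega')$, which is an evident typo (one cannot interpolate between spaces of negative order and land at positive order). Your attempt to reconcile the two via duality is unnecessary and a bit muddled; simply record $H^{-1+ts}(\Omega')$ as the answer and move on.
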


\begin{proposition}[by \cite{Muramatu}]
\label{duality}
Let $s \in \mathbb{R}_+$, $p \in (1,\infty)$, $q \in (1,\infty)$, $\Omega \subseteq M$ be a domain with a Lipschitz boundary then
$$
\left[ \tilde{N}^s_{p,0} (\Omega) \right]' = B^{-s}_{p',1}(\Omega), \,\,\,\, \left[ \tilde{B}^s_{p,q} (\Omega) \right]' = B^{-s}_{p',q'}(\Omega), \,\,\,\,  1/p + 1/p' = 1, \,\,\,\, 1/q + 1/q' = 1.
$$
\end{proposition}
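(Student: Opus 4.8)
The plan is to reduce everything to the Euclidean statement, which is the content of \cite{Muramatu}: over $\mathbb R^d$, and over any bounded Lipschitz domain $\Omega'\subset\mathbb R^d$, the space $N^s_{p,0}$ coincides with the space there denoted $B^s_{p,\infty-}$, whose dual is $B^{-s}_{p',1}$, while $[\tilde B^s_{p,q}(\Omega')]'=B^{-s}_{p',q'}(\Omega')$. It then remains to transfer these identities to $M$ and to the pair $\bigl(\tilde B^s_{p,q}(\Omega),B^{-s}_{p',q'}(\Omega)\bigr)$; I would do this in two steps.

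\emph{Step 1 ($\Omega=M$).} Fix the atlas $\mathcal U=\{(U,\kappa_U)\}$ and partition of unity $\{\psi_U\}$ used to define $B^s_{p,q}(M)$, pick $\chi_U\in C_c^\infty(U)$ equal to $1$ on $\mathrm{supp}\,\psi_U$, and balls $B_U\Subset\kappa_U(U)$ with $\kappa_U(\mathrm{supp}\,\chi_U)\subset B_U$. Then $\iota\colon u\mapsto\bigl((\psi_Uu)\circ\kappa_U^{-1}\bigr)_U$ is an isometry of $B^s_{p,q}(M)$ into the finite $\ell^1$-sum $\bigoplus_U\tilde B^s_{p,q}(B_U)$, and $\rho\colon(f_U)_U\mapsto\sum_U\chi_U\cdot(f_U\circ\kappa_U)$ is a bounded left inverse (using boundedness of multiplication by a fixed cutoff and of the chart pull-backs on the Besov scale in the range of $s$ considered), since $\rho\iota u=\sum_U\chi_U\psi_Uu=u$. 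Thus $B^s_{p,q}(M)$ is a Banach-space retract of a finite $\ell^1$-sum of Besov spaces over smooth bounded domains; dualising, applying the Euclidean result in each summand, and observing that the distributional pairing localises through the partition of unity, $\langle w,u\rangle=\sum_U\langle w,\psi_Uu\rangle$, the adjoint maps $\rho',\iota'$ identify the resulting retract with $B^{-s}_{p',q'}(M)$. Hence $[B^s_{p,q}(M)]'=B^{-s}_{p',q'}(M)$, and likewise $[N^s_{p,0}(M)]'=B^{-s}_{p',1}(M)$; this is already the $\Omega=M$ case of the proposition.

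\emph{Step 2 (general $\Omega$).} The support condition $\mathrm{supp}\,u\subset\bar\Omega$ passes to limits in $B^s_{p,q}(M)$ (convergence there implies convergence in $\mathcal D'(M)$), so $\tilde B^s_{p,q}(\Omega)$ is a closed subspace of $B^s_{p,q}(M)$. By the standard isometric duality for a closed subspace together with Step 1, $[\tilde B^s_{p,q}(\Omega)]'=B^{-s}_{p',q'}(M)/\tilde B^s_{p,q}(\Omega)^{\perp}$, whereas $B^{-s}_{p',q'}(\Omega)=B^{-s}_{p',q'}(M)/\{w:\,w|_\Omega\equiv0\}$ by definition. Therefore the whole proposition — and, with $q'$ replaced by $1$, its $\tilde N^s_{p,0}$ version — comes down to the single identity
$$
\tilde B^s_{p,q}(\Omega)^{\perp}=\bigl\{\,w\in B^{-s}_{p',q'}(M):\ w|_\Omega\equiv0\,\bigr\}.
$$
Here $\subseteq$ is trivial since $C_c^\infty(\Omega)\subset\tilde B^s_{p,q}(\Omega)$. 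The reverse inclusion says that a $w$ with $\mathrm{supp}\,w\subset M\setminus\Omega$ still annihilates every $u\in\tilde B^s_{p,q}(\Omega)$, even though the two supports may touch along $\partial\Omega$; equivalently, that $C_c^\infty(\Omega)$ is dense in $\tilde B^s_{p,q}(\Omega)$. Cutting by the $\psi_U$, $\langle w,u\rangle=\sum_U\langle\chi_Uw,\psi_Uu\rangle$, each term transports via a chart in which $\Omega$ is a Lipschitz subgraph (Definition \ref{boundary} with $\gamma_\Omega=1$) to the corresponding statement on the two sides of a Lipschitz graph in $\mathbb R^d$, and there it is exactly what \cite{Muramatu} proves. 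This local annihilator/density statement is the only genuinely analytic point — it is where the Lipschitz regularity of $\partial\Omega$ is essential — and everything else is the formal bookkeeping of retracts and partitions of unity.
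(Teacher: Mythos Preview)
The paper does not give its own proof of this proposition: it is stated with the attribution ``by \cite{Muramatu}'' and then used as a black box, so there is nothing in the paper to compare your argument against beyond the implicit understanding (spelled out just before Propositions \ref{interpol_1}--\ref{interpol_2}) that results on $\mathbb R^d$ and on Lipschitz Euclidean domains transfer to $M$ by charts and partitions of unity. Your sketch is a reasonable explicit version of precisely that transfer: the retract argument in Step~1 is the standard mechanism for carrying function-space identities to a compact manifold, and in Step~2 you correctly isolate the annihilator equality $\tilde B^s_{p,q}(\Omega)^\perp=\{w:w|_\Omega\equiv 0\}$ --- equivalently, the density of $C_c^\infty(\Omega)$ in $\tilde B^s_{p,q}(\Omega)$ for $q<\infty$ and in $\tilde N^s_{p,0}(\Omega)$ --- as the only point where Lipschitz regularity of $\partial\Omega$ is genuinely used.

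One caveat on attribution: Muramatu's 1976 paper treats the duality on $\mathbb R^d$, not on Lipschitz domains, so the local density/annihilator statement you invoke at the end of Step~2 is not literally contained in \cite{Muramatu}. It does follow by a shift-and-mollify argument along the graph direction $\xi_V$ of Definition~\ref{boundary}, or from the universal extension operator of \cite{Rychkov} (which the paper also cites); it would be cleaner to say so rather than to fold this into the citation of \cite{Muramatu}.
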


\begin{proposition}[by \cite{Savare98}]
\label{ext}
Let $E_1$, $E_2$, $F$ be Banach spaces, an embedding $E_1 \hookrightarrow E_0$ be continuous, an operator  $\mathcal{T}: E_1 \to F$ be bounded. If there exists a constant $L > 0$ and a number $s \in (0,1)$ such that
$$
\| \mathcal{T} e \|_{F} \leq L \| e \|^{1-s}_{E_0} \| e \|^s_{E_1},\,\,\,\,\, \forall e \in E_1,
$$
then by continuity one can extend $\mathcal{T}$ as an operator from $(E_0, E_1)_{s,1}$ to $F$, and there is a constant $c_s$ depending only on $s$ for which
$$
\| \mathcal{T} \|_{(E_0, E_1)_{s,1} \to F} \leq c_s L.
$$
\end{proposition}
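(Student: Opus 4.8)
The plan is to run the standard real-interpolation argument through the $J$-functional. Since the embedding $E_1 \hookrightarrow E_0$ is continuous, $E_0 \cap E_1 = E_1$ as a set, so the multiplicative hypothesis is precisely an estimate on $E_0 \cap E_1$, which is the natural domain on which both $\mathcal{T}$ and the $J$-method operate. Recall that $J(t,e) = \max\{\|e\|_{E_0},\, t\|e\|_{E_1}\}$ for $e \in E_0\cap E_1$, and recall the $J$-description of the interpolation space: $(E_0,E_1)_{s,1}$ coincides, with norms equivalent up to a constant $c_s$ depending only on $s$, with the set of those $e \in E_0+E_1$ admitting a representation $e = \int_0^\infty v(t)\,\frac{dt}{t}$ (a Bochner integral, convergent in $E_0+E_1$) by a measurable $v\colon(0,\infty)\to E_0\cap E_1$ with $\int_0^\infty t^{-s}J(t,v(t))\,\frac{dt}{t}<\infty$, the infimum of this integral over all admissible $v$ being the $J$-norm.

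First I would convert the multiplicative bound into a one-parameter bound. For $e\in E_0\cap E_1$ and every $t>0$ one has $\|e\|_{E_0}\le J(t,e)$ and $\|e\|_{E_1}\le t^{-1}J(t,e)$, whence
$$
\|\mathcal{T}e\|_F \;\le\; L\,\|e\|_{E_0}^{1-s}\|e\|_{E_1}^{s}\;\le\; L\,J(t,e)^{1-s}\bigl(t^{-1}J(t,e)\bigr)^{s}\;=\;L\,t^{-s}J(t,e).
$$
Next, for $e\in(E_0,E_1)_{s,1}$ I would fix a $J$-representation $e=\int_0^\infty v(t)\,\frac{dt}{t}$ as above and pass $\mathcal{T}$ under the integral sign; by the last display,
$$
\int_0^\infty \|\mathcal{T}v(t)\|_F\,\frac{dt}{t}\;\le\;L\int_0^\infty t^{-s}J(t,v(t))\,\frac{dt}{t}\;<\;\infty,
$$
so $\int_0^\infty \mathcal{T}v(t)\,\frac{dt}{t}$ is an absolutely convergent $F$-valued integral of norm at most $L\int_0^\infty t^{-s}J(t,v(t))\,\frac{dt}{t}$. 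When $e\in E_0\cap E_1$ this integral equals $\mathcal{T}e$ (no ambiguity is involved, since $\mathcal{T}e$ just means the given operator applied to $e\in E_1$); taking the infimum over admissible $v$ therefore gives
$$
\|\mathcal{T}e\|_F\;\le\;c_s\,L\,\|e\|_{(E_0,E_1)_{s,1}},\qquad e\in E_0\cap E_1.
$$

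To conclude I would pass to the closure. As the second exponent $q=1$ is finite, $E_0\cap E_1$ is dense in $(E_0,E_1)_{s,1}$; by the previous estimate $\mathcal{T}$ is bounded and linear from this dense subspace, equipped with the $(E_0,E_1)_{s,1}$-norm, into the Banach space $F$, hence it possesses a unique continuous linear extension to all of $(E_0,E_1)_{s,1}$, still denoted $\mathcal{T}$, with $\|\mathcal{T}\|_{(E_0,E_1)_{s,1}\to F}\le c_s L$; by construction this extension restricts to the original operator on $E_1=E_0\cap E_1$. (That $E_0\cap E_1$ does sit inside $(E_0,E_1)_{s,1}$, and in fact $E_1\hookrightarrow(E_0,E_1)_{s,1}$ continuously, follows from $K(t,e)\le\min\{C,t\}\|e\|_{E_1}$, with $C$ the norm of $E_1\hookrightarrow E_0$, together with $\int_0^\infty t^{-s}\min\{C,t\}\,\frac{dt}{t}<\infty$ for $s\in(0,1)$.)

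The single step that requires genuine care, rather than routine verification, is the equivalence of the $J$-functional norm with the $K$-functional norm of $(E_0,E_1)_{s,1}$ with a constant $c_s$ depending only on $s$ and not on the spaces $E_0$, $E_1$, $F$ --- this is the equivalence theorem (``fundamental lemma'') of the real interpolation method, and it is exactly where the restriction $s\in(0,1)$ enters, via convergence of elementary integrals such as $\int_0^\infty t^{-s}\min\{1,t\}\,\frac{dt}{t}$. The remaining ingredients --- the pointwise inequality, passing $\mathcal{T}$ under the integral sign, and the density extension --- are entirely standard.
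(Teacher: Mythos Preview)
The paper does not prove this proposition at all: it is stated with the attribution ``by \cite{Savare98}'' and then used as a black box, so there is no ``paper's own proof'' to compare against. Your argument via the $J$-method is exactly the standard route to this kind of result (it is essentially the proof one finds in Bergh--L\"ofstr\"om or in Savar\'e's original paper), and the overall structure --- convert the multiplicative bound into $\|\mathcal{T}e\|_F\le L\,t^{-s}J(t,e)$, integrate against a $J$-representation, then extend by density of $E_0\cap E_1$ in $(E_0,E_1)_{s,1}$ --- is correct.

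One point deserves a sentence more than you give it. You assert that for $e\in E_0\cap E_1$ the $F$-valued integral $\int_0^\infty \mathcal{T}v(t)\,\frac{dt}{t}$ equals $\mathcal{T}e$, but the representation $e=\int_0^\infty v(t)\,\frac{dt}{t}$ converges only in $E_0+E_1=E_0$, where $\mathcal{T}$ is not defined, so you cannot literally ``pass $\mathcal{T}$ under the integral''. The fix is short: work with the discrete $J$-representation coming from the fundamental lemma, so that the partial sums $S_N=\sum_{|n|\le N}u_n$ satisfy $e-S_N\to 0$ in $E_0$ while $\|e-S_N\|_{E_1}$ stays bounded; the multiplicative hypothesis itself then forces $\|\mathcal{T}(e-S_N)\|_F\le L\,\|e-S_N\|_{E_0}^{1-s}\|e-S_N\|_{E_1}^{s}\to 0$, hence $\sum_n \mathcal{T}u_n=\mathcal{T}e$. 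With that adjustment your proof is complete.
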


\section{Estimates of solutions to problem (\ref{prob})}

\begin{definition}
\label{semi}
A bilinear form $\zeta$ is called Hölder continuous of order $\gamma \in (0,1]$ in $H \subset \mathrm{dom} \zeta \subset L_1(\Omega)$, $\Omega \subset M$, if there exists an atlas $\mathcal{U}$ such that for any map $(U, \kappa_U) \in \mathcal{U}$ and any function $\varphi \in C^{1,1}(M)$ with $\mathrm{supp}\, \varphi \subset U$ there are constants  $C_{U,\varphi}$, $C^{H}_{\zeta}$ such that
$$
\forall u \in H \,\, \forall h \in \mathbb{R}^d \! \! :|h| < \mathrm{dist}(\mathrm{supp} \varphi, \partial U), \,z = \varphi (u - u_h) \in H \Rightarrow |\zeta(z)| \leq C_{U,\varphi} C^H_{\zeta} \| u \|_H |h|^{\gamma},
$$
where $\varphi u_h$  equals $\varphi(x) \cdot \left[ u\circ \kappa_U^{-1} \circ (x+h)\circ\kappa_U\right]$ for $x \in \mathrm{supp} \varphi$, and equals zero if $x \notin \mathrm{supp} \varphi$.
\end{definition}

\begin{theorem}
\label{base}
Assume that $u$ is a solution to the equation (\ref{prob}), $\Omega$ has a Hölder boundary of order $\gamma_\Omega \in (0,1]$, $\mathcal{A}$ satisfies conditions \textbf{A1}--\textbf{A3}, the linear forms $\Phi_r(u,\cdot)$, $\tau(f,\cdot)$ are Hölder continuous of order $\beta_0 \in (0,1]$ in $\Sob(\Omega)$. If $\gamma_0 = \min\{\gamma_c, \beta_0\},$ then
\begin{equation}
\label{est_1}
\| u \|^2_{\tilde{N}_2^{1 + \gamma_\Omega \gamma_0 / 2}(\Omega)} \leq C(\mathcal{A}, \Omega, M) \| u \|_{\SobSl(\Omega)} \left[\| u \|_{\SobSl(\Omega)} + C^{\SobSl(\Omega)}_{\tau(f,\cdot)} + C^{\SobSl(\Omega)}_{\Phi_r(u,\cdot)}\right],
\end{equation}
where $C(\mathcal{A}, \Omega, M) > 0$ depends only on $\mathcal{A}$, $\Omega$, $M$,  and $C^{\SobSl(\Omega)}_{\tau(f,\cdot)}$, $C^{\SobSl(\Omega)}_{\Phi_r(u,\cdot)}$ are the constants in Definition 3.1.

Further, if $u \in \tilde{N}^{1+s}_2(\Omega)$, $s \in (0,1/2)$, and the forms $\Phi_r(u,\cdot)$, $\tau(f,\cdot)$ are Hölder continuous of order $\beta_s \in (0,1]$ in $\tilde{N}^{1+s}_2(\Omega)$, $\gamma_s = \min\{\gamma_c, \beta_s\}$, then for the constants $C_{\tau(f,\cdot)}$, $C_{\Phi_r(u,\cdot)}$ in Definition \ref{semi}, one has
\begin{equation}
\label{est_2}
\| u \|^2_{\tilde{N}_2^{1 + \gamma_\Omega \gamma_s / 2}(\Omega)} \leq C(\mathcal{A}, \Omega, M) \left[\| u \|^2_{\SobSl(\Omega)} + \| u \|_{\tilde{N}_2^{1 + s}(\Omega)} \left( C^{\tilde{N}_2^{1 + s}(\Omega)}_{\tau(f,\cdot)} + C^{\tilde{N}_2^{1 + s}(\Omega)}_{\Phi_r(u,\cdot)} \right) \right].
\end{equation}
\end{theorem}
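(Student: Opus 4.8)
The plan is to exploit the generalized Nirenberg–Savaré difference quotient technique, testing the weak equation~(\ref{weak_p}) against functions of the form $z=\varphi(u-u_h)$ localized by a $C^{1,1}$ cut-off $\varphi$ subordinate to a boundary chart. Fix a chart $(U,\kappa_U)$ from the atlas realizing the Hölder boundary of $\Omega$, with the distinguished direction $\xi_U$. The key geometric observation is that if $h$ is chosen \emph{tangential} to the boundary, i.e. $h\perp\xi_U$, then translation by $h$ followed by a controlled shift in the $\xi_U$ direction of size $C|h|^{\gamma_\Omega}$ (to absorb the Hölder oscillation of the graph function $g_U$) maps $\Omega\cap U$ into itself, so that the translated test function retains support in $\bar\Omega$ and lies in $\Sob(\Omega)$. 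This is precisely the mechanism that converts boundary Hölder regularity of order $\gamma_\Omega$ into a loss of derivative by the factor $\gamma_\Omega$ in the final exponent $1+\gamma_\Omega\gamma_0/2$.

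First I would plug $z=\varphi(u-u_h)$ (more precisely the symmetrized combination $z=\varphi^2(u_h-u)$ shifted appropriately, the standard Nirenberg trick) into $\Phi_0(u,z)+\Phi_r(u,z)=\tau(f,z)$. The principal-part term $\Phi_0(u,z)=\int_M\mathbf{A}(\nabla u,\nabla z)\,d\mu$ is expanded using the product rule for $\nabla(\varphi^2(u_h-u))$ and a discrete integration by parts that trades $u_h$ for $u_{-h}$; the second-difference structure produces a quadratic form $\int \mathbf{A}(\nabla(u_h-u),\nabla(u_h-u))\varphi^2\,d\mu$ which is bounded below by $\alpha\|\varphi\nabla(u_h-u)\|_{L_2}^2$ via~\textbf{A1}, plus error terms coming from (a) differentiating $\varphi^2$, controlled by $\|u\|_{\Sob(\Omega)}\cdot\|\varphi(u_h-u)\|_{\Sob(\Omega)}$ and hence by Proposition~\ref{Brezis} by $C|h|\,\|u\|_{\Sob}^2$, and (b) the Hölder oscillation of $\mathbf{A}$ itself, namely $[\mathbf{A}]_{C^{0,\gamma_c}}$ times $|h|^{\gamma_c}$ times the same Sobolev norms, exactly as in Definition~\ref{semi} with $\zeta=\Phi_0$. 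The lower-order contributions $\Phi_r(u,z)$ and $\tau(f,z)$ are handled directly by the Hölder-continuity hypothesis on those forms: $|\Phi_r(u,z)|\le C_{U,\varphi}C^{\Sob(\Omega)}_{\Phi_r(u,\cdot)}\|u\|_{\Sob(\Omega)}|h|^{\beta_0}$ and similarly for $\tau(f,\cdot)$. Collecting everything and taking $\gamma_0=\min\{\gamma_c,\beta_0\}$ gives
$$
\|\varphi\,(\nabla u_h-\nabla u)\|_{L_2(U)}^2\le C\,\|u\|_{\Sob(\Omega)}\bigl[\|u\|_{\Sob(\Omega)}+C^{\Sob(\Omega)}_{\tau(f,\cdot)}+C^{\Sob(\Omega)}_{\Phi_r(u,\cdot)}\bigr]\,|h|^{\gamma_0},
$$
for all tangential $h$. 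Dividing by $|h|^{\gamma_\Omega\gamma_0}$ and noting that the admissible shift was $|h|^{\gamma_\Omega}$ — so the effective increment in the argument is of order $|h|^{\gamma_\Omega}$ — we recognize the Nikolskii seminorm $[\,\cdot\,]_{N_2^{1+\gamma_\Omega\gamma_0/2}}$ restricted to tangential directions. The normal direction is recovered from the equation itself (the principal symbol is elliptic, so $\partial_{\xi_U}^2 u$ is expressed through tangential second derivatives and $f$), which costs no extra boundary regularity; summing over the finite atlas with the partition of unity and adding the trivial $\|u\|_{\Sob(\Omega)}$ bound for the lower-order part of the Nikolskii norm yields~(\ref{est_1}).

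For~(\ref{est_2}) the argument is the bootstrap: assuming already $u\in\tilde N_2^{1+s}(\Omega)$, one repeats the same computation but now measures the error terms using the stronger norm. Proposition~\ref{Brezis}'s second inequality, $\|\varphi(v_h-v)\|_{\Sob(\Omega)}\le\hat C\|v\|_{H^2}|h|$, is not available since $u\notin H^2$, so instead I would interpolate: the difference-quotient gain available from the $\tilde N_2^{1+s}$ regularity is $|h|^s$ in the $\Sob$-norm of $u_h-u$, which feeds into the error terms to produce $|h|^{\gamma_c+s}$ and $|h|^{\beta_s}$ contributions; with $\gamma_s=\min\{\gamma_c,\beta_s\}$ and after dividing by $|h|^{\gamma_\Omega\gamma_s}$ one reads off the claimed Nikolskii bound, the right-hand side now being $\|u\|_{\Sob}^2+\|u\|_{\tilde N_2^{1+s}}(C_{\tau}+C_{\Phi_r})$. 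The main obstacle, and the place requiring genuine care rather than bookkeeping, is the geometric step: verifying that the shifted-and-translated test function $z=\varphi^2(u_{h+t\xi_U}-u)$ with $t\asymp|h|^{\gamma_\Omega}$ genuinely belongs to $\Sob(\Omega)$ — i.e. that its support stays inside $\bar\Omega$ — uniformly over the atlas, and that the commutator between the translation in the chart and the one in the manifold (the charts are only $C^{1,1}$, the coordinate transitions only $C^{1,1}$) produces errors of order $|h|$ and not worse. This is exactly where the hypotheses "$M$ is $C^{1,1}$" and "$\partial\Omega\in C^{0,\gamma_\Omega}$" are consumed, and it is the heart of the Savaré-type construction; the rest is the systematic separation of tangential from normal directions and careful tracking of which Hölder exponent ($\gamma_c$ from the coefficients versus $\beta_0$ or $\beta_s$ from the data) is the binding constraint.
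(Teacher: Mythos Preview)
Your plan has a genuine gap at exactly the two places you flag as ``standard''. First, the ``discrete integration by parts that trades $u_h$ for $u_{-h}$'' does not close here: with the corrected shift $h'=h+C|h|^{\gamma_\Omega}\xi_U$ the test function $\varphi^2(u_{h'}-u)$ is admissible, but after the change of variables you land on an integral involving $u_{-h'}$, and $-h'=-h-C|h|^{\gamma_\Omega}\xi_U$ pushes support \emph{out} of $\bar\Omega$, so the reflected object is not in $\Sob(\Omega)$ and you cannot invoke the weak equation a second time to kill the cross term. Second, ``recover the normal direction from the equation'' presupposes a tangential/normal split of second derivatives, which in turn presupposes a bi-Lipschitz flattening of $\partial\Omega$; for merely $C^{0,\gamma_\Omega}$ boundaries with $\gamma_\Omega<1$ no such flattening exists, and the graph direction $\xi_U$ is not a genuine normal along the boundary.

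The paper replaces both steps by Savar\'e's one-sided device. One takes the operator $T^\psi_{h'}u=\psi u_{h'}+(1-\psi)u$, uses the \emph{algebraic} identity
\[
\Phi_0(T^\psi_{h'}u-u,\,T^\psi_{h'}u-u)=\Phi_0(T^\psi_{h'}u,T^\psi_{h'}u)-\Phi_0(u,u)-2\,\Phi_0(u,\,u-T^\psi_{h'}u),
\]
and invokes the weak equation only \emph{once}, on the last term (which is admissible since $u-T^\psi_{h'}u=\psi(u-u_{h'})\in\Sob(\Omega)$). The difference $\Phi_0(T^\psi_{h'}u,T^\psi_{h'}u)-\Phi_0(u,u)$ is then controlled not by integration by parts but by \emph{convexity} of $\eta\mapsto\mathbf{A}_x(\eta,\eta)$: since $T^\psi_{h'}\nabla u$ is a convex combination of $\nabla u$ and $(\nabla u)_{h'}$, one has $\mathfrak a(x,T^\psi_{h'}\nabla u)\le T^\psi_{h'}\mathfrak a(x,\nabla u)$, which reduces the estimate to a pure translation of the scalar density $\psi\,\mathfrak a(x,\nabla u)$ and yields the $|h|^{\gamma_\Omega\gamma_c}$ factor from the H\"older continuity of $\mathbf A$ and of the chart. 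Moreover the paper does not restrict to tangential $h$: the shift $\phi(h)=|h|+C_\Omega|h|^{\gamma_\Omega}$ along $\xi_V$ (note the extra $|h|$) absorbs any normal component of $h$ as well, so all directions are treated at once and no ellipticity-based recovery is needed. Your identification of the geometric step and of the role of $\gamma_0=\min\{\gamma_c,\beta_0\}$ is correct, but the coercivity mechanism must be the convexity/$T^\psi$ argument rather than Nirenberg's symmetric differencing.
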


\begin{proof}
As the $\tilde{N}^s_2(\Omega)$-norm is independent of the atlas from its definition (up to  equivalence), we shall assume that this atlas coincides with atlas $\mathcal{V}$ in Definition~\ref{boundary}. Let $\{\psi_V\}$ be a subordinate (with respect to $\mathcal{V}$) partition of unity, $\mathrm{supp}\, \psi_V \subset V$. Consider the function $\phi(h) = |h| + C_\Omega |h|^{\gamma_\Omega}$, $h \in \mathbb{R}^d$. Then the functions $(\psi_V)_{h \pm \phi(h) \xi_V}$ are well defined for $|h| < \phi^{-1}\left[ \mathrm{dist}(\mathrm{supp}\psi_V, \partial V) / 20 \right]$.

Since for the proof it suffices to obtain (\ref{est_1})--(\ref{est_2}) with the left-hand side replaced by $\psi_V u$, $(V, \kappa_V) \in \mathcal{V}$, without the loss of generality, we can assume that a chart $V$ is fixed and for convenience we shall write  $\psi$, $\xi$ instead of $\psi_V$ and $\xi_V$. We estimate the difference
\begin{equation}
\label{est_3}
\| \psi \cdot (u - u_h)\|_{\SobSl(V)} \leq \| \psi(u - u_t) \|_{\SobSl(V)} + \| \psi(u_t - u_h) \|_{\SobSl(V)},
\end{equation}
where $u_{\pm t} = u_{\mp \phi(h) \xi}$. One can note that it is possible to rewrite the terms in the right hand-side of (\ref{est_3}) as $\| \psi (u_t - v) \|_{\SobSl(V)}$, where $v$ is equal to $u$ and $u_h$ for the first and the second terms respectively. The following inequality holds:
\begin{align*}
\| \psi(u_t - v)\|^2_{\SobSm(V)} = \int_M \mathfrak{g}(x, \nabla \left[\psi(u_t - v)\right]) d\mu = \int_M \mathfrak{g}(x, \nabla \left[\psi_{-t}(u - v_{-t})\right]) d\mu + \\
\int_M \mathfrak{g} (x + \phi(h)\xi, \nabla \left[\psi_{-t}(u - v_{-t})\right]) d\mu_{-t} - \int_M \mathfrak{g}(x, \nabla \left[\psi_{-t}(u - v_{-t})\right]) d\mu_{-t} + \\
\int_M \mathfrak{g}(x, \nabla \left[\psi_{-t}(u - v_{-t})\right]) (d\mu_{-t} - d\mu),
\end{align*}
here $\mathfrak{g}(x,\eta) = \textbf{G}_x(\eta, \eta)$. Due to smoothness of $M$, it is evident that
\begin{align*}
\int_M \mathfrak{g} (x + \phi(h)\xi, \nabla \left[\psi_{-t}(u - v_{-t})\right]) d\mu_{-t} - \int_M \mathfrak{g}(x, \nabla \left[\psi_{-t}(u - v_{-t})\right]) d\mu_{-t} \leq \\
C(M,\Omega) \| \psi \|_{C^{0,1}(M)} \| u \|^2_{\SobSm(\Omega)} |h|^{\gamma_\Omega},\\
\int_M \mathfrak{g}(x, \nabla \left[\psi_{-t}(u - v_{-t})\right]) (d\mu_{-t} - d\mu) \leq C(M,\Omega) \| \psi \|_{C^{0,1}(M)} \| u \|^2_{\SobSm(\Omega)} |h|^{\gamma_\Omega}.
\end{align*}
Thus
\begin{align*}
\| \psi(u_t - u) \|^2_{\SobSm(V)} \leq \| \psi(u - u_{-t}) \|^2_{\SobSm(V)} + C_{M,\Omega,\psi} \| u \|^2_{\SobSm(\Omega)} |h|^{\gamma_\Omega} + \tilde{C}_{M,\Omega, \psi}\| u \|^2_{\SobSm(\Omega)} |h|^{\gamma_\Omega},\\
\| \psi(u_t - u_h) \|^2_{\SobSm(V)} \leq \| \psi(u - (u_h)_{-t}) \|^2_{\SobSm(V)} + C_{M,\Omega,\psi} \| u \|^2_{\SobSm(\Omega)} |h|^{\gamma_\Omega} + \tilde{C}_{M,\Omega, \psi}\| u \|^2_{\SobSm(\Omega)} |h|^{\gamma_\Omega}.
\end{align*}

Let us define the following operator\cite{Savare02}:
$$
T^{\psi}_h u = \psi u_h + (1 - \psi) u,
$$
and introduce $\varphi_1(h) = \phi(h) \xi$, $\varphi_2(h) = h + \varphi_1(h)$. Since $\mathrm{supp}(u - T^{\psi}_{\varphi_i(h)}u) \subset \bar{\Omega}$ and $\tilde{H}^1(\Omega) = \Sob(\Omega)$ one can obtain that $(u - T^{\psi}_{\varphi_i(h)}u) \in \Sob(\Omega)$. 

Hence, from condition \textbf{A1} it follows that:
\begin{align*}
\| \psi(u - u_{-t}) \|^2_{\SobSm(\Omega)} \leq \frac{1}{\alpha} \Phi_0 \left(T^{\psi}_{\varphi_1(h)} u - u, T^{\psi}_{\varphi_1(h)} u - u \right),\\
\| \psi(u - (u_h)_{-t}) \|^2_{\SobSm(\Omega)} \leq \frac{1}{\alpha} \Phi_0 \left(T^{\psi}_{\varphi_2(h)} u - u, T^{\psi}_{\varphi_2(h)} u - u \right),
\end{align*}
\begin{equation}
\label{nine}
\Phi_0\left( T^{\psi}_{\varphi_i(h)} u - u, T^{\psi}_{\varphi_i(h)} u - u \right)  = \Phi_0 (T^{\psi}_{\varphi_i(h)} u, T^{\psi}_{\varphi_i(h)} u) - \Phi_0(u,u) + 2 \Phi_0(u, T^{\psi}_{\varphi_i(h)}u - u).
\end{equation}

Since the linear form $\tau(f, \cdot)$ is Hölder continuous of order $\beta_s$ in $\tilde{N}^{1+s}_2(\Omega)$, $s\in (0,1)$ and in $\Sob(\Omega)$, $s = 0$,  we have
\begin{align*}
|\tau(f, \psi(u - u_{\varphi_i(h)}))| &\leq C_{\Omega} C_{V,\psi} C^{\SobSm(\Omega)}_{\tau(f, \cdot)} \| u \|_{\SobSm(\Omega)} |h|^{\gamma_\Omega \beta_0}, \,\,\,\, s = 0,\\
|\tau(f, \psi(u - u_{\varphi_i(h)}))| &\leq C_{\Omega} C_{V,\psi} C^{\tilde{N}^{1+s}_2(\Omega)}_{\tau(f, \cdot)} \| u \|_{\tilde{N}^{1+s}_2(\Omega)} |h|^{\gamma_\Omega \beta_s}, \,\,\,\, s \in (0,1/2).
\end{align*}

Similarly, as the form $\Phi_r(u,\cdot)$ is Hölder continuous of order $\beta_s$, we obtain
\begin{align*}
|\Phi_r(u, \psi(u - u_{\varphi_i(h)}))| &\leq C_{\Omega} C_{V,\psi} C^{\SobSm(\Omega)}_{\Phi_r(u, \cdot)} \| u \|_{\SobSm(\Omega)} |h|^{\gamma_\Omega \beta_0}, \,\,\,\, s = 0,\\
|\Phi_r(u, \psi(u - u_{\varphi_i(h)}))| &\leq C_{\Omega} C_{V,\psi} C^{\tilde{N}^{1+s}_2(\Omega)}_{\Phi_r(u, \cdot)} \| u \|_{\tilde{N}^{1+s}_2(\Omega)} |h|^{\gamma_\Omega \beta_s}, \,\,\,\, s \in (0,1/2).
\end{align*}

It remains to estimate the terms $\Phi_0(T^{\psi}_{\varphi_i(h)}u, T^{\psi}_{\varphi_i(h)}u) - \Phi_0(u,u)$ in (\ref{nine}). Let us note that the gradient of $T^{\psi}_{\varphi_i(h)}u$ equals
$$
\nabla T^{\psi}_{\varphi_i(h)}u = (\psi \nabla u_{\varphi_i (h)} + (1-\psi)\nabla u) + (\nabla \psi) (u_{\varphi_i(h)} - u) = T^{\psi}_{\varphi_i (h)} \nabla u + (\nabla \psi) (u_{\varphi_i(h)} - u).
$$
For $\mathfrak{a}(x,\eta) = \textbf{A}_x(\eta, \eta)$ we see that
\begin{align}
\Phi_0(T^{\psi}_{\varphi_i(h)}u, T^{\psi}_{\varphi_i(h)}u) - \Phi_0(u,u) \leq \\
\label{eleven}
\int_M \mathfrak{a}(x, T^{\psi}_{\varphi_i (h)} \nabla u + (\nabla \psi) (u_{\varphi_i(h)} - u)) d\mu - \int_M \mathfrak{a}(x, T^{\psi}_{\varphi_i (h)} \nabla u) d\mu + \\
\label{twelve}
\int_M \mathfrak{a}(x, T^{\psi}_{\varphi_i (h)} \nabla u) d\mu - \int_M \mathfrak{a}(x, \nabla u) d\mu.
\end{align}
Due to condition \textbf{A2} and the Cauchy inequality we have
$$
\mathfrak{a}(x, \xi + \eta) - \mathfrak{a}(x,\xi) \leq \left( \mathfrak{a}(x, \eta) \mathfrak{a}(x, 2\xi + \eta) \right)^{1/2} \leq \| \textbf{A} \|_{C(M)} \mathfrak{g}(x, \eta)^{1/2} \left( 2 \mathfrak{g}(x,\xi)^{1/2} + \mathfrak{g}(x,\eta)^{1/2} \right).
$$
Thus for (\ref{eleven}) the following estimate holds
\begin{align*}
\int_M \mathfrak{a}(x, T^{\psi}_{\varphi_i (h)} \nabla u + (\nabla \psi) (u_{\varphi_i(h)} - u)) d\mu - \int_M \mathfrak{a}(x, T^{\psi}_{\varphi_i (h)} \nabla u) d\mu \leq \\
C_V \| \textbf{A} \|_{C(M)} \| \psi (u_{\varphi_i(h)} - u) \|_{L_2(V)} \left( \| \psi (u_{\varphi_i(h)} - u) \|_{L_2(V)} + 2 \| T^{\psi}_{\varphi_i (h)} \nabla u \|_{L_2(V)} \right).
\end{align*}
From Proposition \ref{Brezis} we conclude
$$
\| \psi (u_{\varphi_i(h)} - u) \|_{L_2(V)} \leq \tilde{C}_{V,M} C_\Omega |h|^{\gamma_\Omega} \| u \|_{\SobSl(\Omega)},
$$
and therefore we can obtain an upper estimate for (\ref{eleven}) as $C'_m C_{V,\psi} C_\Omega |h|^{\gamma_\Omega} \| u \|^2_{\SobSl(\Omega)}$. Since $\mathfrak{a}$ is convex, we have
\begin{align*}
\mathfrak{a}(x, T^{\psi}_{\varphi_i (h)} \nabla u) -\mathfrak{a}(x, \nabla u) \leq \left[T^{\psi}_{\varphi_i (h)} \mathfrak{a}(x,\nabla u)\right] - \mathfrak{a}(x,\nabla u) = \psi \left[ \mathfrak{a}(x, \nabla u_{\varphi_i(h)}) - \mathfrak{a}(x,\nabla u) \right];
\end{align*}
thus,
$$
\int_M \mathfrak{a}(x, T^{\psi}_{\varphi_i (h)} \nabla u) -\mathfrak{a}(x, \nabla u) d\mu \leq \int_M \psi \left[ \mathfrak{a}(x, \nabla u_{\varphi_i(h)}) - \mathfrak{a}(x,\nabla u) \right] d\mu,
$$
so, by Hölder continuity of \textbf{A} we have the following estimate for sum (\ref{twelve}):
\begin{align*}
\int_M \psi \left[ \mathfrak{a}(x, \nabla u_{\varphi_i(h)}) - \mathfrak{a}(x,\nabla u) \right] d\mu =\\
\int_M \psi_{-\varphi_i(h)} \mathfrak{a}(x - \varphi_i(h), \nabla u) d\mu_{-\varphi_i(h)} - \int_M \psi  \cdot \mathfrak{a}(x, \nabla u) d\mu \leq \\
\int_M (\psi_{-\varphi_i(h)} - \psi) \mathfrak{a}(x - \varphi_i(h), \nabla u) d\mu_{-\varphi_i(h)} + \int_M \psi \cdot  ( \mathfrak{a}(x - \varphi_i(h), \nabla u) - \mathfrak{a}(x, \nabla u) )d\mu +\\
\int_M \psi \cdot \mathfrak{a}(x - \varphi_i(h), \nabla u) (d\mu_{-\varphi_i(h)} - d\mu) \leq \| \textbf{A}\|_{C^{0,\gamma_c}(M)} C_\Omega C_V C_{\psi} |h|^{\gamma_\Omega \gamma_c} \| u \|^2_{\SobSm(\Omega)}.
\end{align*}
\end{proof}

\section{Conditions for Hölder continuity of linear forms}

\begin{lemma}
\label{regr}
We have the following inequalities
\begin{align}
\label{lem_1}
\| u - u_h \|_{N^{\gamma_1}_{2}(\mathbb{R}^d)} &\leq C_{\gamma_1, \gamma_2} |h|^{\gamma_2 - \gamma_1} \| u \|_{N^{\gamma_2}_{2}(\mathbb{R}^d)}, \,\,u \in N^{\gamma_2}_{2}(\mathbb{R}^d), \,\,\,\,0 <\gamma_1 < \gamma_2 < 1;   \\
\label{lem_2}
\| u - u_h \|_{N^{\gamma_1}_{2}(\mathbb{R}^d)} &\leq C_{\gamma_1, 1} |h|^{1-\gamma_1} \| u \|_{H^1(\mathbb{R}^d)}, \,\,\,\, \,\,\,\,\,u \in H^{1}(\mathbb{R}^d), \,\,\,\,\,\,\,0 < \gamma_1 < 1;\\
\label{lem_3}
\| u - u_h \|_{N^{1 - \gamma_1}_{2}(\mathbb{R}^d)} &\leq \tilde{C}_{\gamma_1, 1} |h|^{\gamma_1 + \gamma_2} \| u \|_{N^{1+\gamma_2}_2(\mathbb{R}^d)},\, u \in N^{1+\gamma_2}_2(\mathbb{R}^d), \,0 < \gamma_1 < \gamma_1 + \gamma_2 \leq 1.
\end{align}
\end{lemma}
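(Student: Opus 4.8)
The plan is to prove all three inequalities by elementary manipulation of translates in $L_2(\mathbb{R}^d)$ — translation invariance of the $L_2$--norm, the definitions of the Nikolskii seminorms, and inequality \eqref{brz}; no interpolation is needed, in keeping with the difference--quotient spirit of the paper. Writing $w:=u-u_h$, in each of \eqref{lem_1}--\eqref{lem_3} the target smoothness index is $<1$, so $\|w\|_{N^{*}_2(\mathbb{R}^d)}$ splits into the $L_2$--term $\|w\|_{L_2}$ and the first--difference seminorm $\sup_{k\ne0}|k|^{-\sigma}\|w-w_k\|_{L_2}$ (with $\sigma=\gamma_1$ in \eqref{lem_1}, \eqref{lem_2} and $\sigma=1-\gamma_1$ in \eqref{lem_3}), and I would bound the two separately. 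For the $L_2$--term, the definition of $[\,\cdot\,]_{N^{\gamma_2}_2}$ gives $\|w\|_{L_2}\le|h|^{\gamma_2}[u]_{N^{\gamma_2}_2}$ in \eqref{lem_1}, and \eqref{brz} gives $\|w\|_{L_2}\le|h|\,\|\nabla u\|_{L_2}$ in \eqref{lem_2}, \eqref{lem_3}; one reaches the required power of $|h|$ by treating $|h|\le1$ (where, e.g., $|h|^{\gamma_2}\le|h|^{\gamma_2-\gamma_1}$, and $|h|\le|h|^{\gamma_1+\gamma_2}$ because $\gamma_1+\gamma_2\le1$) and $|h|\ge1$ (where $\|w\|_{L_2}\le2\|u\|_{L_2}$ and the required power of $|h|$ is $\ge1$) separately; the constants depend only on the exponents.

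The heart of the matter is the seminorm term. Fix $k\ne0$; the second difference $w-w_k=u-u_h-u_k+u_{h+k}$ is symmetric in $h$ and $k$, hence equals both $(u-u_h)-(u-u_h)_k$ and $(u-u_k)-(u-u_k)_h$. Applying the one--variable estimate to each representation yields a product bound: $\|w-w_k\|_{L_2}\le2[u]_{N^{\gamma_2}_2}\min\{|h|,|k|\}^{\gamma_2}$ for \eqref{lem_1}, $\|w-w_k\|_{L_2}\le2\|\nabla u\|_{L_2}\min\{|h|,|k|\}$ for \eqref{lem_2}, and — differentiating and using \eqref{brz} in the outer variable together with the $N^{1+\gamma_2}_2$--seminorm of $\nabla u$ in the inner one — $\|w-w_k\|_{L_2}\le C[u]_{N^{1+\gamma_2}_2}\min\{|h|^{\gamma_2}|k|,\,|h|\,|k|^{\gamma_2}\}$ for \eqref{lem_3}. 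Dividing by $|k|^{\sigma}$ and taking $\sup_{k\ne0}$, I would split into $|k|\le|h|$ (the residual power of $|k|$ is positive, so the quotient is bounded by its value at $|k|=|h|$) and $|k|\ge|h|$ (the residual power of $|k|$ is $\le0$ — this is exactly where $\gamma_1<\gamma_2$ in \eqref{lem_1}, $\gamma_1<1$ in \eqref{lem_2}, and $\gamma_1+\gamma_2\le1$ in \eqref{lem_3} enter — so the quotient is again extremal at $|k|=|h|$); both regimes produce precisely $|h|^{\gamma_2-\gamma_1}[u]_{N^{\gamma_2}_2}$, resp. $|h|^{1-\gamma_1}\|\nabla u\|_{L_2}$, resp. $C|h|^{\gamma_1+\gamma_2}[u]_{N^{1+\gamma_2}_2}$, which with the $L_2$--term gives the claim.

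I expect the one genuine difficulty to be \eqref{lem_3}: there $w$ has, informally, a full extra derivative available, and the delicate point is to spend it in the correct variable — a full difference quotient (through \eqref{brz}) in whichever of $h,k$ currently plays the outer role, and the fractional $N^{\gamma_2}_2$--control of $\nabla u$ in the other — so that the product estimate comes out symmetric and the hypothesis $\gamma_1+\gamma_2\le1$ can do its job, forcing the exponent $\gamma_1+\gamma_2-1$ of $|k|$ in the range $|k|\ge|h|$ to be nonpositive. Once this bookkeeping is set up, \eqref{lem_1} and \eqref{lem_2} are the ``$\gamma_2=1$, zeroth--order'' specializations of the same computation and present no obstacle.
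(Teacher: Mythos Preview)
Your argument is correct and complete: the second--difference symmetry $w-w_k=(u-u_h)-(u-u_h)_k=(u-u_k)-(u-u_k)_h$ together with the elementary min--bound and the $|k|\lessgtr|h|$ split does exactly what you claim in all three cases, and the sign conditions on the residual exponents of $|k|$ line up precisely with the hypotheses $\gamma_1<\gamma_2$, $\gamma_1<1$, $\gamma_1+\gamma_2\le1$.

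However, your remark that ``no interpolation is needed, in keeping with the difference--quotient spirit of the paper'' mischaracterizes what the paper actually does here: the paper's own proof of this lemma is entirely by real interpolation. It starts from the trivial endpoints $\|u-u_h\|_{H^1}\le 2\|u\|_{H^1}$, $\|u-u_h\|_{L_2}\le|h|\,\|u\|_{H^1}$, $\|u-u_h\|_{H^1}\le|h|\,\|u\|_{H^2}$, then obtains \eqref{lem_2} by interpolating the first two via $N^{\gamma_1}_2=(L_2,H^1)_{\gamma_1,\infty}$ (Proposition~\ref{interpol_1}); \eqref{lem_3} by interpolating \eqref{lem_2} with the third endpoint (and a separate interpolation for the borderline $\gamma_1+\gamma_2=1$); and \eqref{lem_1} by interpolating the tautology $\|u-u_h\|_{N^{\gamma_1}_2}\le2\|u\|_{N^{\gamma_1}_2}$ with \eqref{lem_2}, using $(N^{\gamma_1}_2,H^1)_{t,\infty}=N^{\gamma_1(1-t)+t}_2$. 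So the two routes are genuinely different. Your direct second--difference argument is more elementary and self--contained, needing only \eqref{brz} and the definitions, and it yields trackable constants; the paper's interpolation route is shorter to write once Propositions~\ref{interpol_1}--\ref{interpol_2} are available and makes transparent that these inequalities are instances of the standard scale behavior of the translation semigroup on the Besov ladder.
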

\begin{proof}
In fact, from estimate (\ref{brz}) we have
\begin{align}
\label{lem_p1}
\| u - u_h \|_{H^1(\mathbb{R}^d)} &\leq 2 \| u \|_{H^1(\mathbb{R}^d)};\\
\label{lem_p2}
\| u - u_h \|_{L_2(\mathbb{R}^d)} &\leq  |h| \| u \|_{H^1(\mathbb{R}^d)};\\
\label{lem_p3}
\| u - u_h \|_{H^1(\mathbb{R}^d)} &\leq  |h| \| u \|_{H^2(\mathbb{R}^d)}.
\end{align}
By the real interpolation of (\ref{lem_p1}), (\ref{lem_p2}) and from Proposition \ref{interpol_1} one can obtain
$$
\| u - u_h \|_{N^{\gamma_1}_{p,0}(\mathbb{R}^d)} = \| u - u_h \|_{(L_2(\mathbb{R}^d), H^1(\mathbb{R}^d))_{\gamma_1, \infty}} \leq C_{\gamma_1,1}' \cdot 2^{\gamma_1} |h|^{1-\gamma_1} \| u \|_{H^1(\mathbb{R}^d)}.
$$
In the same way, from Propositions \ref{interpol_1}, \ref{interpol_2}, by real interpolation of (\ref{lem_2}) and (\ref{lem_p3}) we conclude that (\ref{lem_3}) is true. Indeed, for $0 < \gamma_1 < 1 - \gamma_2$, we have
\begin{align*}
\| u - u_h \|_{N^{st + 1 - t}_2(\mathbb{R}^d)} &= \| u - u_h \|_{(H^1(\mathbb{R}^d), N^s_2(\mathbb{R}^d))_{t,\infty}} \leq \\
C_{t,s} |h|^{(1-s)t} |h|^{1-t} \| u \|_{(H^2(\mathbb{R}^d), H^1(\mathbb{R}^d))_{t,\infty}} &= C_{t,s} |h|^{1-st} \| u \|_{N^{2-t}_2(\Omega)},
\end{align*} 
where $t = 1- \gamma_2$, $s = 1 - \frac{\gamma_1}{1 - \gamma_2}$. As above, by real interpolation of (\ref{lem_p2}) and (\ref{lem_p3}) in case of $\gamma_1 = 1 - \gamma_2$ we infer that  (\ref{lem_3}) is true. Since $\mathcal{T}_h$ is a linear continuous operator in $N^s_2(\mathbb{R}^d)$, the following holds
\begin{equation}
\label{lem_p4}
\| u - u_h \|_{N^s_2(\mathbb{R}^d)} \leq 2 \| u \|_{N^s_2(\mathbb{R}^d)},
\end{equation}
thus,
$$
\| u - u_h \|_{N^{\gamma_1}_2(\mathbb{R}^d)} \leq C_{\gamma_1, t} |h|^{(1-\gamma_1) t} \| u \|_{(N^{\gamma_1}_2(\mathbb{R}^d), H^1(\mathbb{R}^d))_{t, \infty}} = C_{\gamma_1, t} |h|^{(1-\gamma_1)t} \| u \|_{N^{\gamma_1(1-t) + t}_2(\mathbb{R}^d)},
$$
here $t = \frac{\gamma_2 - \gamma_1}{1 - \gamma_1}$.
\end{proof}

\begin{corollary}
\label{right_side}
For any function $f \in \left[ \tilde{N}^{1-s}_{2,0}(\Omega) \right]'$, $s \in (0,1)$, the linear form $\tau(f, \cdot)$ is Hölder continuous of order $(s + t)$ in the space $\tilde{N}^{1+t}_{2}(\Omega)$, $s < s + t \leq 1$, and of order $s$ in the space $\Sob(\Omega)$, and
$$
C_{\tau(f,\cdot)}^{\tilde{N}^{1+t}_{2}(\Omega)} = C_{\tau(f,\cdot)}^{\SobSl(\Omega)} = \| f \|_{\left[ \tilde{N}^{1-s}_{2,0}(\Omega) \right]'}.
$$
If $f \in L_2(\Omega)$, the linear form $\tau(f,\cdot)$ is Hölder continuous of order $1$ in the space $\Sob(\Omega)$, and $C_{\tau(f,\cdot)}^{\SobSl(\Omega)} = \| f\|_{L_2(\Omega)}$.
\end{corollary}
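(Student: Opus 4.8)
The plan is to unwind Definition~\ref{semi} directly. Since $\tau(f,z)$ is the value of the functional $f$ on $z$, as soon as $z=\varphi(u-u_h)$ lies in $\tilde{N}^{1-s}_{2,0}(\Omega)$ we have $|\tau(f,z)|\le\|f\|_{[\tilde{N}^{1-s}_{2,0}(\Omega)]'}\,\|z\|_{\tilde{N}^{1-s}_{2,0}(\Omega)}$, so everything reduces to bounding $\|\varphi(u-u_h)\|_{\tilde{N}^{1-s}_{2,0}(\Omega)}$ by $\|u\|_{H}\,|h|^{\gamma}$, with $\gamma=s+t$ for $H=\tilde{N}^{1+t}_{2}(\Omega)$ (where $s<s+t\le1$) and $\gamma=s$ for $H=\Sob(\Omega)$. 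The membership $z\in\tilde{N}^{1-s}_{2,0}(\Omega)$ is automatic: since $z\in H$ and $1+t\ge1>1-s$, the chain~(\ref{chain_emb}) together with the description of the little Nikolskii space yields $\tilde{N}^{1+t}_{2}(\Omega),\Sob(\Omega)\hookrightarrow\tilde{N}^{1-s}_{2,0}(\Omega)$. As $\mathrm{supp}\,\varphi$ lies in a single chart $(U,\kappa_U)$, the $\tilde{N}^{1-s}_{2,0}(\Omega)$-norm of $z$ is comparable — with the chart distortion absorbed into the admissible constant $C_{U,\varphi}$ — to $\|z\circ\kappa_U^{-1}\|_{N^{1-s}_2(\mathbb{R}^d)}$, and writing $\tilde{\varphi}=\varphi\circ\kappa_U^{-1}$, $\tilde{u}=u\circ\kappa_U^{-1}$, $w=(\varphi u)\circ\kappa_U^{-1}$ and $v_h(\cdot)=v(\cdot+h)$ one has the elementary identity
\[
z\circ\kappa_U^{-1}=(w-w_h)+(\tilde{\varphi}_h-\tilde{\varphi})\,\tilde{u}_h ,
\]
in which $w$ is compactly supported, belongs to $N^{1+t}_2(\mathbb{R}^d)$ (resp.\ $H^1(\mathbb{R}^d)$), and satisfies $\|w\|\le C_{U,\varphi}\|u\|_H$, by invariance of these classes under $C^{1,1}$ coordinate changes and boundedness of multiplication by $\varphi\in C^{1,1}(M)$.

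The core of the argument is Lemma~\ref{regr} applied to the first term. If $H=\tilde{N}^{1+t}_{2}(\Omega)$, estimate~(\ref{lem_3}) with $\gamma_1=s$, $\gamma_2=t$ gives $\|w-w_h\|_{N^{1-s}_2(\mathbb{R}^d)}\le\tilde{C}_{s,t}\,|h|^{s+t}\,\|w\|_{N^{1+t}_2(\mathbb{R}^d)}$, whose constraint $0<s<s+t\le1$ is precisely the hypothesis; if $H=\Sob(\Omega)$, estimate~(\ref{lem_2}) with $\gamma_1=1-s$ gives $\|w-w_h\|_{N^{1-s}_2(\mathbb{R}^d)}\le C_{1-s,1}\,|h|^{1-(1-s)}\,\|w\|_{H^1(\mathbb{R}^d)}=C_{1-s,1}\,|h|^{s}\,\|w\|_{H^1(\mathbb{R}^d)}$. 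The second term is of strictly lower order: $\varphi\in C^{1,1}(M)$ forces $\|\tilde{\varphi}_h-\tilde{\varphi}\|_{W^1_\infty(\mathbb{R}^d)}\le C\,\|\varphi\|_{C^{1,1}(M)}\,|h|$ (this is exactly why Definition~\ref{semi} is stated for $C^{1,1}$ cut-offs), while the $L_2$- and $H^1$-norms of $\tilde{u}_h$ over the bounded support of that term are $\le C_{U,\varphi}\|u\|_H$ because translation is an isometry and $H\hookrightarrow H^1$; hence $(\tilde{\varphi}_h-\tilde{\varphi})\tilde{u}_h$ has $L_2$- and $H^1(\mathbb{R}^d)$-norms $\le C_{U,\varphi}|h|\,\|u\|_H$, and by the identification $N^{1-s}_2(\mathbb{R}^d)=(L_2(\mathbb{R}^d),H^1(\mathbb{R}^d))_{1-s,\infty}$ of Proposition~\ref{interpol_1} together with the elementary bound for the functor $(\cdot,\cdot)_{1-s,\infty}$ its $N^{1-s}_2$-norm is $\le C_{U,\varphi}|h|\,\|u\|_H\le C_{U,\varphi}|h|^{\gamma}\,\|u\|_H$ (as $\gamma\le1$ and $|h|$ is bounded on a chart). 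Combining the two terms, $|\tau(f,z)|\le C_{U,\varphi}\,\|f\|_{[\tilde{N}^{1-s}_{2,0}(\Omega)]'}\,\|u\|_H\,|h|^{\gamma}$, which is the asserted Hölder continuity with $C^{H}_{\tau(f,\cdot)}=\|f\|_{[\tilde{N}^{1-s}_{2,0}(\Omega)]'}$ for both choices of $H$.

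The last assertion is easier: for $f\in L_2(\Omega)$ one bounds $|\tau(f,z)|\le\|f\|_{L_2(\Omega)}\,\|z\|_{L_2(\Omega)}$ and uses Proposition~\ref{Brezis} (equivalently applies~(\ref{brz}) to $w$, the second term again being $O(|h|)$) to obtain $\|\varphi(u-u_h)\|_{L_2(\Omega)}\le C_{U,\varphi}\,\|u\|_{\SobSl(\Omega)}\,|h|$, i.e.\ Hölder continuity of order $1$ in $\Sob(\Omega)$ with $C^{\SobSl(\Omega)}_{\tau(f,\cdot)}=\|f\|_{L_2(\Omega)}$. I do not expect a deep obstacle: the only substantive input is Lemma~\ref{regr}, and the single point that really has to be checked there is the exponent bookkeeping described above. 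Everything else — the passage between the norms on $M$, on $\Omega$ and on $\mathbb{R}^d$, the commutator identity and the $C^{1,1}$-smallness of $\tilde{\varphi}_h-\tilde{\varphi}$, reducing the residual (bounded) range of $h$ by the trivial bound $\|z\|\le C\|u\|_H$, and the embeddings $H\hookrightarrow\tilde{N}^{1-s}_{2,0}(\Omega)$ — is routine but must be handled carefully, since the $N^{1-s}_2$-type norms are localisable only up to equivalent constants and the linear shift $u\mapsto u_h$ is meaningful only inside a single chart.
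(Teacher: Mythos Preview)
Your argument is correct and follows essentially the same route as the paper's proof: bound $|\tau(f,z)|$ by duality against $\|z\|_{\tilde N^{1-s}_{2,0}(\Omega)}$, reduce $\varphi(u-u_h)$ to $(\varphi u)-(\varphi u)_h$, and then apply Lemma~\ref{regr} (estimate~(\ref{lem_3}) for $H=\tilde N^{1+t}_2(\Omega)$ and estimate~(\ref{lem_2}) for $H=\Sob(\Omega)$). The only cosmetic difference is that the paper passes from $\chi(u-u_h)$ to $\chi u-(\chi u)_h$ in one line with a $C_M(\|\chi\|_{C^{0,1}(M)}+1)$ factor, whereas you write out the commutator identity $\varphi(u-u_h)=(\varphi u-(\varphi u)_h)+(\varphi_h-\varphi)u_h$ and estimate the remainder explicitly in $N^{1-s}_2$ via interpolation; this is the same content, just made more transparent.
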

\begin{proof}
Indeed, for an arbitrary map $(U,\kappa_U)$ from the atlas of manifold $M$ and function $\chi \in C^{1,1}(U)$, $\mathrm{supp} \chi \subset U$, $t> 0$, using estimate (\ref{lem_3}), one can obtain:
\begin{align}
|\tau(f, \chi(u - u_h))| \leq \| f \|_{\left[ \tilde{N}^{1-s}_{2,0}(\Omega) \right]'} \| \chi (u - u_h) \|_{\tilde{N}^{1-s}_{2,0}(\Omega)} \leq \\
C_M (\| \chi \|_{C^{0,1}(M)} + 1) \| f \|_{\left[ \tilde{N}^{1-s}_{2,0}(\Omega) \right]'} \| \chi u - (\chi u)_h \|_{\tilde{N}^{1-s}_{2,0}(\Omega)} \leq \\
\label{tt}
C_{M,\chi} \| f \|_{\left[ \tilde{N}^{1-s}_{2,0}(\Omega) \right]'} \| u \|_{\tilde{N}^{1+t}_{2}(\Omega)} |h|^{s+t}.
\end{align}
Now we must only prove that the form $\tau(f,\cdot)$ is Hölder continuous of order $s$. This follows by combining inequality (\ref{tt}) and estimate (\ref{lem_2}) of Lemma \ref{regr}.
\end{proof}

The following embedding Theorem is proved in \cite{Besov}.

\begin{theorem}
\label{emb}
Let $1 \leq p_0 \leq p_1 \leq \infty$, $0 < s_1 \leq s_0 < 1$. Then
\begin{align*}
N^{s_0}_{p_0} (\mathbb{R}^d) \hookrightarrow N^{s_1}_{p_1} (\mathbb{R}^d), \,\,\,\, s_0 - \frac{d}{p_0} = s_1 - \frac{d}{p_1};\\
N^{s_0}_{p_0} (\mathbb{R}^d) \hookrightarrow L_{p_2} (\mathbb{R}^d), \,\,\,\, s_0 - \frac{d}{p_0} > - \frac{d}{p_2}.
\end{align*}
One can replace in the formula above $\mathbb{R}^d$ with $M$.
\end{theorem}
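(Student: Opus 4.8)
\textit{Proof proposal.} The plan is to reduce everything to $\mathbb{R}^d$ by localisation and then to run the standard Littlewood--Paley argument, using only the dyadic description $N^s_p = B^s_{p,\infty}$ recorded above and Bernstein's inequality: if $g \in L_{p_0}(\mathbb{R}^d)$ has $\mathrm{supp}\, F g \subset \{|\xi| \le R\}$ and $1 \le p_0 \le p_1 \le \infty$, then $\|g\|_{L_{p_1}(\mathbb{R}^d)} \le C_d\, R^{d(1/p_0 - 1/p_1)}\|g\|_{L_{p_0}(\mathbb{R}^d)}$, which I would prove by convolving with a rescaled fixed Schwartz kernel and applying Young's inequality. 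Applied to a dyadic block $a_j$ with $\mathrm{supp}\, F a_j \subset M_j$ this reads $\|a_j\|_{L_{p_1}} \le C_d\, 2^{jd(1/p_0 - 1/p_1)}\|a_j\|_{L_{p_0}}$ for every $j \in \mathbb{Z}_+$ (for $j=0$ the Fourier support is a ball, which is harmless). In the second embedding I would take $p_0 \le p_2$ — that is exactly what Bernstein needs, and on the compact $M$ the remaining range is anyway covered by the trivial inclusion $L_{p_0}(M) \hookrightarrow L_{p_2}(M)$.

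For the first embedding, given $f \in N^{s_0}_{p_0}(\mathbb{R}^d)$ I would fix a decomposition $f = \sum_{j \ge 0} a_j$, $\mathrm{supp}\, F a_j \subset M_j$, almost optimal for the $N^{s_0}_{p_0}$-norm. The same family $\{a_j\}$ is admissible for the target space, and Bernstein together with the hypothesis $s_1 + d(1/p_0 - 1/p_1) = s_0$ gives
\[
2^{s_1 j}\|a_j\|_{L_{p_1}} \le C_d\, 2^{(s_1 + d/p_0 - d/p_1)j}\|a_j\|_{L_{p_0}} = C_d\, 2^{s_0 j}\|a_j\|_{L_{p_0}}.
\]
Taking the supremum over $j$ and then the infimum over decompositions gives $\|f\|_{N^{s_1}_{p_1}(\mathbb{R}^d)} \le C_d\,\|f\|_{N^{s_0}_{p_0}(\mathbb{R}^d)}$. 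For the second embedding I would set $\delta := s_0 - d/p_0 + d/p_2$, which is positive precisely because the hypothesis $s_0 - d/p_0 > -d/p_2$ is strict, and estimate, with $f$ and $\{a_j\}$ as before,
\begin{multline*}
\|f\|_{L_{p_2}(\mathbb{R}^d)} \le \sum_{j \ge 0}\|a_j\|_{L_{p_2}} \le C_d \sum_{j \ge 0} 2^{d(1/p_0 - 1/p_2)j}\|a_j\|_{L_{p_0}} \\
= C_d \sum_{j \ge 0} 2^{-\delta j}\bigl(2^{s_0 j}\|a_j\|_{L_{p_0}}\bigr) \le \frac{C_d}{1 - 2^{-\delta}}\,\sup_{j}\, 2^{s_0 j}\|a_j\|_{L_{p_0}} ,
\end{multline*}
so passing to the infimum over decompositions finishes the Euclidean case; here I also use the elementary fact that $\sum_j\|a_j\|_{L_{p_2}} < \infty$ forces $\sum_j a_j$ to converge in $L_{p_2}$ and hence to coincide there with its $\mathcal{S}'$-limit $f$.

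To pass to $M$, I would use the definition of $N^s_p(M)$ through a fixed finite atlas $\mathcal{U}$ and a subordinate partition of unity $\{\psi_U\}$: if $f \in N^{s_0}_{p_0}(M)$, then every $\psi_U f$ lies in $\tilde{N}^{s_0}_{p_0}(U)$; pushing it forward by $\kappa_U$ — a $C^{1,1}$, in particular bi-Lipschitz, diffeomorphism, under which Besov spaces of order in $(0,1)$ are invariant — produces a compactly supported element of $N^{s_0}_{p_0}(\mathbb{R}^d)$ whose dyadic decomposition can be chosen not to enlarge the support; applying the two Euclidean embeddings, pulling back, and summing over $U$ gives $f \in N^{s_1}_{p_1}(M)$, resp. $f = \sum_U \psi_U f \in L_{p_2}(M)$, with the asserted norm bounds.

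I do not expect a serious obstacle: the whole proof is Bernstein's inequality plus bookkeeping of the dyadic exponents. Two points deserve care. First, the fine index $q = \infty$ on \emph{both} sides of the first embedding is exactly what makes the borderline relation $s_0 - d/p_0 = s_1 - d/p_1$ admissible with no loss (for finite $q$ one would be forced to lower the fine index on the right). Second, the localisation step rests on the diffeomorphism invariance of $N^s_p$, $s \in (0,1)$, and on the pointwise-multiplier property of the cut-offs $\psi_U$ — standard facts, already implicit in the way $B^s_{p,q}(M)$ is defined in the excerpt — which I would quote rather than reprove.
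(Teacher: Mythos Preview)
The paper gives no proof of this statement: it is simply quoted from the monograph of Besov and Il'in (reference \cite{Besov} in the paper), so there is nothing to compare against at the level of argument. Your Littlewood--Paley/Bernstein sketch is correct and is in fact better adapted to the dyadic definition of $B^s_{p,q}$ used elsewhere in the paper than the real-variable, integral-representation methods of the cited book; in that sense you are supplying a self-contained proof where the paper is content to cite.

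One small correction: your remark that the dyadic decomposition of $(\psi_U f)\circ\kappa_U^{-1}$ ``can be chosen not to enlarge the support'' is both false (Littlewood--Paley pieces of a compactly supported function are never compactly supported) and unnecessary. All you need is that $(\psi_U f)\circ\kappa_U^{-1}\in N^{s_0}_{p_0}(\mathbb{R}^d)$; the Euclidean embedding then places it in $N^{s_1}_{p_1}(\mathbb{R}^d)$ (resp.\ $L_{p_2}(\mathbb{R}^d)$), and after pulling back the support of $\psi_U f$ itself is already inside $U$, which is exactly what the definition of $\tilde N^{s_1}_{p_1}(U)$ (resp.\ $L_{p_2}(M)$) asks. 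Drop that clause and the localisation step is clean.
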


It follows from Theorem 1 in \cite{Goldman} that one can obtain the following
$$
N^{s_0}_{p_0} (\mathbb{R}^d) \not\hookrightarrow L_{p_2} (\mathbb{R}^d), \,\,\,\, s_0 - \frac{d}{p_0} = - \frac{d}{p_2}.
$$

\begin{lemma}
\label{prod}
Let $u \in N^\alpha_2(M)$, $v \in N^\beta_2(M)$, $w \in L_2(M)$, $0 < \alpha \leq \beta < 1$. Then for any  $\varepsilon' > 0$ the products $uv$, $wv$ belong to $N^\alpha_{s-\varepsilon}(M)$ and $L_{s-\varepsilon}(M)$ respectively, where
$$
s = \frac{d}{d - \beta}.
$$
\end{lemma}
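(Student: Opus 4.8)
The plan is to localize to $\mathbb{R}^d$ and then combine the Sobolev-type embeddings of Theorem~\ref{emb} with Hölder's inequality and log-convexity of $L_p$-norms. Since $N^{\gamma}_p(M)$ is normed through the quantities $\|\psi_U(\cdot)\|_{\tilde N^\gamma_p(U)}$ over a fixed finite atlas $\mathcal U=\{(U,\kappa_U)\}$ with subordinate partition of unity $\{\psi_U\}$, and multiplication by a fixed $C^\infty$ cutoff is bounded on every $N^\gamma_p$ and $L_p$, it suffices to bound $\|\psi_U\,uv\|_{\tilde N^\alpha_{s-\varepsilon}(U)}$ and $\|\psi_U\,wv\|_{L_{s-\varepsilon}(U)}$. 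Writing $\psi_U uv=(\psi_U u)(\chi v)$ with $\chi\in C^\infty$, $\chi\equiv 1$ near $\mathrm{supp}\,\psi_U$ and $\mathrm{supp}\,\chi\subset U$ (this does not affect the difference quotients defining $\tilde N^\alpha_p$ for $|h|$ small), and transporting through $\kappa_U$, I am reduced to: given $u\in N^\alpha_2(\mathbb{R}^d)$, $v\in N^\beta_2(\mathbb{R}^d)$, $w\in L_2(\mathbb{R}^d)$ with support in a fixed compact set, show $uv\in N^\alpha_{s-\varepsilon}(\mathbb{R}^d)$ and $wv\in L_{s-\varepsilon}(\mathbb{R}^d)$.

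Next I record the embeddings. By Theorem~\ref{emb}, $N^\alpha_2(\mathbb{R}^d)\hookrightarrow L_{q_1}(\mathbb{R}^d)$ for every $q_1$ with $1/q_1>(d-2\alpha)/(2d)$ (with $q_1=\infty$ when $\alpha>d/2$), and $N^\beta_2(\mathbb{R}^d)\hookrightarrow L_{q_2}(\mathbb{R}^d)$ for every $1/q_2>(d-2\beta)/(2d)$. Hölder's inequality gives $wv\in L_r(\mathbb{R}^d)$ whenever $1/r=1/2+1/q_2$; letting $1/q_2\downarrow(d-2\beta)/(2d)$ and using $1/2+(d-2\beta)/(2d)=(d-\beta)/d=1/s$ yields $wv\in L_{s-\varepsilon}(\mathbb{R}^d)$ for every $\varepsilon>0$. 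Likewise $uv\in L_r(\mathbb{R}^d)$ for every $1/r>1/q_1+1/q_2$, and since $\alpha>0$ the infimum $(d-\alpha-\beta)/d$ of such $1/r$ is strictly below $1/s$, so in fact $uv\in L_s(\mathbb{R}^d)\subset L_{s-\varepsilon}(\mathbb{R}^d)$.

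The heart of the matter is the Nikolskii seminorm of $uv$. For $h\neq 0$ write $(uv)_h-uv=u_h(v_h-v)+(u_h-u)v$. For the second summand, Hölder with $1/p=1/2+1/q_2$ gives $\|(u_h-u)v\|_{L_p}\le\|u_h-u\|_{L_2}\|v\|_{L_{q_2}}\le [u]_{N^\alpha_2(\mathbb{R}^d)}\|v\|_{L_{q_2}}|h|^\alpha$ straight from the definition of the seminorm, with $p$ ranging over all values below $s$. For the first summand I interpolate the $L_p$-norms of $v_h-v$: by log-convexity $\|v_h-v\|_{L_b}\le\|v_h-v\|_{L_2}^{\theta}\|v_h-v\|_{L_{q_2}}^{1-\theta}$ with $1/b=\theta/2+(1-\theta)/q_2$; choosing $\theta\in(0,1]$ with $\theta\beta=\alpha$ (possible since $\alpha\le\beta$) and using $\|v_h-v\|_{L_2}\le[v]_{N^\beta_2}|h|^\beta$ and $\|v_h-v\|_{L_{q_2}}\le 2\|v\|_{L_{q_2}}$ gives $\|v_h-v\|_{L_b}\le C|h|^{\alpha}$. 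Then $\|u_h(v_h-v)\|_{L_p}\le\|u_h\|_{L_{q_1}}\|v_h-v\|_{L_b}\le C|h|^{\alpha}$ with $1/p=1/q_1+1/b$; a short computation, sending $1/q_1\downarrow(d-2\alpha)/(2d)$ and $1/q_2\downarrow(d-2\beta)/(2d)$, gives $1/b\downarrow(d-2\beta+2\alpha)/(2d)$ and hence $1/p\downarrow(d-\beta)/d=1/s$, so again $p$ may be taken in $(1,s)$. Combining the two estimates, for any admissible $p=s-\varepsilon$ we get $\sup_{h\neq 0}|h|^{-\alpha}\|(uv)_h-uv\|_{L_p(\mathbb{R}^d)}<\infty$, which together with $uv\in L_{s-\varepsilon}$ is exactly $uv\in N^\alpha_{s-\varepsilon}(\mathbb{R}^d)$; undoing the localization gives $uv\in N^\alpha_{s-\varepsilon}(M)$ and $wv\in L_{s-\varepsilon}(M)$.

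The computation is essentially exponent bookkeeping; the one conceptual point — and the reason only $s-\varepsilon$, not $s$, appears — is that in the term $(u_h-u)v$ the factor $u_h-u$ carries the full decay $|h|^{\alpha}$ only in $L_2$ and cannot be placed in any $L_b$ with $b>2$ without losing decay (the borderline case being excluded by the non-embedding $N^{s_0}_{p_0}\not\hookrightarrow L_{p_2}$ at $s_0-d/p_0=-d/p_2$ quoted after Theorem~\ref{emb}); this pins the integrability of $uv$ strictly below $s=d/(d-\beta)$. The only cases needing separate (but easier) wording are $\alpha\ge d/2$ or $\beta\ge d/2$, i.e. $d=1$, where one uses the $L_\infty$-endpoint of Theorem~\ref{emb} for $u$ or $v$ together with plain Hölder.
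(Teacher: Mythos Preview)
Your proof is correct and follows essentially the same strategy as the paper's: localize to a chart, split $(uv)_h-uv$ into two pieces, and control each via H\"older's inequality combined with the embeddings of Theorem~\ref{emb}. The only (minor) difference is in the term carrying $v_h-v$: the paper invokes the Nikolskii--to--Nikolskii embedding $N^\beta_2\hookrightarrow N^\alpha_q$ from Theorem~\ref{emb} directly to obtain $\|v-v_h\|_{L_q}\lesssim |h|^\alpha$, whereas you reach the same bound by log-convex interpolation between $\|v_h-v\|_{L_2}\lesssim |h|^\beta$ and $\|v_h-v\|_{L_{q_2}}\lesssim 1$.
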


\begin{proof}
Since the multiplication and embedding operators
$$
L_2(M) \times L_2(M) \to L_1(M), \,\,\,\,\, N^{\beta}_2 (M) \hookrightarrow N^{\alpha}_2(M)
$$
are continuous, for $u \in N^\alpha_2(M)$, $v \in N^{\beta}_2(M)$ it clearly follows that the product $uv$ belongs to $N^\alpha_s(M)$, $s = 1$. Let us refine the order of summability $s$. Without loss of generality, we can assume that the supports of all functions $u$, $v$ are contained in a subdomain of a fixed chart $U$. Hence, the shift operator is well defined. Therefore
$$
\sup_{h \neq 0} \frac{\| uv - u_h v_h \|_{L_s(M)}}{|h|^\alpha} \leq \sup_{h \neq 0} \frac{\| ( u - u_h) v_h \|_{L_s(M)}}{|h|^\alpha} + \sup_{h \neq 0} \frac{\| u (v - v_h) \|_{L_s(M)}}{|h|^\alpha},
$$
and by Hölder's inequality it follows that
\begin{align}
\label{prod_1}
\frac{\| ( u - u_h) v_h \|_{L_s(M)}}{|h|^\alpha} \leq \frac{1}{|h|^\alpha} \| u - u_h \|_{L_{p_1 s}(M)} \| v_h \|_{L_{q_1 s}(M)};\\
\label{prod_2}
\frac{\| u (v - v_h) \|_{L_s(M)}}{|h|^\alpha} \leq \frac{1}{|h|^\alpha} \| u \|_{L_{p_2 s}(M)} \| v - v_h \|_{L_{q_2 s}(M)},
\end{align}
where $p_j, q_j \geq 1$, $1/p_j + 1/q_j = 1$, $j = 1,2$. For the boundedness of the right hand-side of (\ref{prod_1}) as $|h| \to 0$ it is sufficient to consider the case $p_1 s_1 = 2$. From Theorem \ref{emb}, for any $\varepsilon > 0$, we have the embedding $N^\beta_2(M) \hookrightarrow L_{q_1 s_1 - \varepsilon}(M)$, and
$$
\frac12 - \frac{1}{q_1 s_1} = \delta, \,\,\,\,\, \frac{1}{p_1 s_1} + \frac{1}{q_1 s_1} = \frac{1}{s_1}, \,\,\,\, \delta = \min\{ (1 - \varepsilon) / 2, \beta / d \}.
$$
Similarly, from Theorem \ref{emb} it follows that $N^\beta_2(M) \hookrightarrow N^{\alpha}_{q_2 s_2}(M)$, $N^{\alpha}_{2}(M)\hookrightarrow L_{p_2 s_2-\varepsilon}(M)$ and
$$
\frac12 - \frac{1}{q_2 s_2} = \frac{\beta - \alpha}{d}, \,\,\,\,\, \frac{1}{2} - \frac{1}{p_2 s_2} = \frac{\alpha}{d}, \,\,\,\,\, \frac{1}{p_1 s_1} + \frac{1}{q_1 s_1} = \frac{1}{s_1}.
$$
By setting $s = \min\{s_1, s_2\}$ we obtain the required.
\end{proof}

\begin{lemma}[see \cite{Grisvard}]
\label{prod_Zol}
For any function $u \in L_2(M)$, $v \in H^1(M)$, and $\varepsilon > 0$ the product $uv$ belongs to $L_s(M)$, $s = \min\{ 2 - \varepsilon, \frac{d}{d-1} \}$.
\end{lemma}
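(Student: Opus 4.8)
The plan is to reduce the claim to Hölder's inequality applied to $u$ and a Sobolev-improved version of $v$, exploiting that $M$ is compact so that $L_r(M)\hookrightarrow L_{r'}(M)$ whenever $r\ge r'\ge 1$; it then suffices to exhibit, in each dimension $d$, one exponent $s_0$ at least as large as $\min\{2-\varepsilon,\frac{d}{d-1}\}$ with $uv\in L_{s_0}(M)$ and embed down. The matter is local: covering $M$ by a finite atlas with subordinate partition of unity $\{\psi_U\}$ and writing $v=\sum_U\psi_U v$, each $\psi_U v$ lies in $H^1$ with support in a single chart, so the Euclidean statement (the form in which it appears in \cite{Grisvard}) transfers to $M$. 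Note $uv\in L_1(M)$ automatically by Cauchy--Schwarz, so only higher integrability is in question; we may also assume $\varepsilon$ is small, since the claim for smaller $\varepsilon$ is stronger and implies the claim for larger $\varepsilon$ by the compactness embedding.

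For $d\ge 3$ I would invoke the sharp (endpoint) Sobolev embedding $H^1(M)\hookrightarrow L_{2d/(d-2)}(M)$. Since $\frac12+\frac{d-2}{2d}=\frac{d-1}{d}$, Hölder's inequality on the product of $u\in L_2(M)$ and $v\in L_{2d/(d-2)}(M)$ gives
$$
\|uv\|_{L_{d/(d-1)}(M)}\le\|u\|_{L_2(M)}\|v\|_{L_{2d/(d-2)}(M)}\le C\|u\|_{L_2(M)}\|v\|_{H^1(M)},
$$
so $uv\in L_{d/(d-1)}(M)$. As $\frac{d}{d-1}\le\frac32<2$, one has $\min\{2-\varepsilon,\frac{d}{d-1}\}=\frac{d}{d-1}$ for small $\varepsilon$, and in all cases $L_{d/(d-1)}(M)\hookrightarrow L_{\min\{2-\varepsilon,d/(d-1)\}}(M)$, which yields the assertion.

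For $d\le 2$ I would instead use that $H^1(M)\hookrightarrow L_p(M)$ for every finite $p$ (and $H^1(M)\hookrightarrow L_\infty(M)$ when $d=1$). Taking $p=\frac{2(2-\varepsilon)}{\varepsilon}$, so that $\frac12+\frac1p=\frac1{2-\varepsilon}$, Hölder's inequality yields $uv\in L_{2-\varepsilon}(M)$ with norm bounded by $C\|u\|_{L_2(M)}\|v\|_{H^1(M)}$; since $\frac{d}{d-1}=2$ when $d=2$ and is vacuous when $d=1$ (where in fact $uv\in L_2(M)$ directly from $v\in L_\infty(M)$), this is exactly $L_{\min\{2-\varepsilon,d/(d-1)\}}(M)$. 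I expect no genuine obstacle anywhere in this argument; the only point worth flagging is the case $d=2$, where the endpoint $L_\infty(M)$ is unavailable for $H^1(M)$, which is precisely why the $\varepsilon$-loss in the statement cannot be removed.
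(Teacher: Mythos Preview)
Your argument is correct. The paper does not supply its own proof of this lemma but simply cites Grisvard; the route you take---Sobolev embedding $H^1(M)\hookrightarrow L_{2d/(d-2)}(M)$ for $d\ge 3$ (respectively $L_p(M)$ for all finite $p$ when $d=2$, and $L_\infty(M)$ when $d=1$), followed by H\"older's inequality and localization via a partition of unity---is exactly the standard argument behind the cited result, so there is nothing further to compare.
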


\begin{lemma}
\label{cond_coef}
The linear form $\Phi_r(u, \cdot)$, $u \in \Sob(\Omega)$, is Hölder continuous of order $\beta_t = \gamma + t \in (0,1]$, $\gamma \in (0,1]$, $t \in [0,1)$, in Nikolskii space $\tilde{N}^{1+t}_2(\Omega)$, $0 < t <1$ ($\Sob(\Omega)$ if $t = 0$) if for some number $\varepsilon > 0$ the following conditions hold:
\begin{enumerate}
\item Let $\gamma \in (0,1)$ then
\begin{equation}
\label{cond1}
\textbf{b} \in L_{\frac{d+\varepsilon}{1-\gamma}}(\Omega), \,\,\,\, c \in \left[\tilde{N}^{1-\gamma}_{\frac{d}{d-1} - \varepsilon, 0}(\Omega) \right]',
\end{equation}
and $C^{\tilde{N}^{1+t}_2(\Omega)}_{\Phi_r(u,\cdot)} = C^{\SobSl(\Omega)}_{\Phi_r(u,\cdot)} = \left( \| \textbf{b} \|_{L_{\frac{d+\varepsilon}{1-\gamma}}(\Omega)} + \| c \|_{\left[\tilde{N}^{1-\gamma}_{\frac{d}{d-1}-\varepsilon, 0}(\Omega) \right]'} \right) \| u \|_{\SobSl(\Omega)}$;
\item If $\gamma = 1$, $t = 0$, then
\begin{equation}
\label{cond2}
\textbf{b} \in L_{\infty}(\Omega), \,\,\,\, c \in L_{\max\{2 + \varepsilon, d\}}(\Omega),
\end{equation}
and $C^{\SobSl(\Omega)}_{\Phi_r(u,\cdot)} = \left( \| \textbf{b} \|_{L_{\infty}(\Omega)} + \| c \|_{L_{\max\{2 + \varepsilon, d\}}(\Omega)} \right) \| u \|_{\SobSl(\Omega)}$.
\end{enumerate}
(Constants $C^{\tilde{N}^{1+t}_2(\Omega)}_{\Phi_r(u,\cdot)}$, $C^{\SobSl(\Omega)}_{\Phi_r(u,\cdot)}$ are introduced in Definition \ref{semi}.)
\end{lemma}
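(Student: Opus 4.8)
The plan is to write $\Phi_r(u,v)=\int_\Omega \textbf{b}(\nabla u)v\,d\mu+c(uv)$ and, having fixed a chart $(U,\kappa_U)$ together with a cut-off $\varphi\in C^{1,1}(M)$, $\mathrm{supp}\,\varphi\subset U$, to bound $|\Phi_r(u,\varphi(v-v_h))|$ for $v$ ranging over $H:=\tilde{N}^{1+t}_2(\Omega)$ (or $\Sob(\Omega)$ when $t=0$); the hypothesis in Definition~\ref{semi} guarantees $z:=\varphi(v-v_h)\in H$ is supported in $\bar\Omega$, so all pairings make sense. Every constant that depends only on $\varphi$, $U$, $M$, $\Omega$ and on the Hölder and Sobolev embedding constants is absorbed into the constant $C_{U,\varphi}$ of Definition~\ref{semi}; the factor $\|u\|_{\SobSl(\Omega)}$ always reappears because the fixed first slot of $\Phi_r$ is controlled through $\|\nabla u\|_{L_2(\Omega)}$ in the drift term and through $\|u\|_{H^1(M)}$, acting as a multiplier, in the zero-order term.

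\emph{Drift term.} By $|\textbf{b}(\nabla u)|\le|\textbf{b}|_g|\nabla u|_g$ and Hölder's inequality with $1/p+1/q=1/2$,
$$
\Bigl|\int_\Omega \textbf{b}(\nabla u)\,\varphi(v-v_h)\,d\mu\Bigr|\le\|\textbf{b}\|_{L_p(\Omega)}\,\|\nabla u\|_{L_2(\Omega)}\,\|\varphi(v-v_h)\|_{L_q(\Omega)}.
$$
Split $\varphi(v-v_h)=\bigl(\varphi v-(\varphi v)_h\bigr)+(\varphi-\varphi_h)v_h$; the second summand is $O(|h|\,\|v\|_{L_q})=O(|h|^{\beta_t}\|v\|_H)$ since $\beta_t\le1$ and $H\hookrightarrow L_q$, while for the first we combine Lemma~\ref{regr} (estimate~(\ref{lem_2}) when $t=0$, estimate~(\ref{lem_3}) with $\gamma_1=\gamma$, $\gamma_2=t$ when $t>0$), giving $\|\varphi v-(\varphi v)_h\|_{N^{1-\gamma}_2}\le C|h|^{\beta_t}\|v\|_H$, with the embedding $N^{1-\gamma}_2(M)\hookrightarrow L_q(M)$ of Theorem~\ref{emb}. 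That embedding holds precisely when $1/q>1/2-(1-\gamma)/d$, i.e. $p>d/(1-\gamma)$, and $\textbf{b}\in L_{(d+\varepsilon)/(1-\gamma)}(\Omega)$ leaves the required room. In the borderline case $\gamma=1$ (so $t=0$) one takes instead $q=2$, $\textbf{b}\in L_\infty(\Omega)$, and Proposition~\ref{Brezis} directly yields $\|\varphi(v-v_h)\|_{L_2(V)}\le C|h|\,\|v\|_{\SobSl(\Omega)}$.

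\emph{Zero-order term.} Here $(u\varphi)(v-v_h)$ is supported in $\bar\Omega$, with $u\varphi\in H^1(M)$, and Lemma~\ref{regr} again gives $v-v_h\in N^{1-\gamma}_2(M)$ with $\|v-v_h\|_{N^{1-\gamma}_2}\le C|h|^{\beta_t}\|v\|_H$; moreover $v-v_h$ lies in the \emph{little} space $N^{1-\gamma}_{2,0}(M)$, because $\|(v-v_h)-(v-v_h)_{h'}\|_{L_2}\le2\|v-v_{h'}\|_{L_2}\le2|h'|\,\|\nabla v\|_{L_2}$ is $o(|h'|^{1-\gamma})$ as $|h'|\to0$ and bounded as $|h'|\to\infty$. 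Multiplication by $u\varphi$ is bounded from $N^{1-\gamma}_2(M)$ into $N^{1-\gamma}_{q_0}(M)$, $q_0=\tfrac{d}{d-1}-\varepsilon$: apply Lemma~\ref{prod} with $\alpha=1-\gamma$ and $\beta\in(1-\gamma,1)$ so close to $1$ that $\tfrac{d}{d-\beta}-\tfrac{\varepsilon}{2}\ge q_0$, using $H^1(M)\hookrightarrow N^{\beta}_2(M)$. To land in the little space $\tilde{N}^{1-\gamma}_{q_0,0}(\Omega)$ — which is what pairs with $c$ by Proposition~\ref{duality} — note that $H^1(M)\cdot C^\infty(M)\subset H^1(M)\hookrightarrow N^{1-\gamma}_{q_0,0}(M)$ (Sobolev embedding of $H^1$ into $B^{\sigma}_{q_0,2}(M)$ with $\sigma>1-\gamma$, then $B^{\sigma}_{q_0,2}\hookrightarrow B^{1-\gamma}_{q_0,2}\hookrightarrow N^{1-\gamma}_{q_0,0}$ via~(\ref{chain_emb})); hence multiplication by $u\varphi$ carries the closure $N^{1-\gamma}_{2,0}(M)$ of $C^\infty(M)$ into $N^{1-\gamma}_{q_0,0}(M)$. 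Therefore
$$
|c(u\varphi(v-v_h))|\le\|c\|_{[\tilde{N}^{1-\gamma}_{q_0,0}(\Omega)]'}\,\|(u\varphi)(v-v_h)\|_{\tilde{N}^{1-\gamma}_{q_0,0}(\Omega)}\le C\,\|c\|_{[\tilde{N}^{1-\gamma}_{q_0,0}(\Omega)]'}\,\|u\|_{\SobSl(\Omega)}\,|h|^{\beta_t}\,\|v\|_H.
$$
When $\gamma=1$, $t=0$, one instead invokes Lemma~\ref{prod_Zol} to place $(u\varphi)(v-v_h)$ in $L_s(\Omega)$, $s=\min\{2-\varepsilon,\,d/(d-1)\}$, with norm $\le C|h|\,\|u\|_{\SobSl(\Omega)}\,\|v\|_{\SobSl(\Omega)}$, and pairs with $c$ in the conjugate Lebesgue space, which is contained in $L_{\max\{2+\varepsilon,\,d\}}(\Omega)$. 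Adding the two bounds gives $|\Phi_r(u,\varphi(v-v_h))|\le C_{U,\varphi}\bigl(\|\textbf{b}\|_\ast+\|c\|_\ast\bigr)\,\|u\|_{\SobSl(\Omega)}\,\|v\|_H\,|h|^{\beta_t}$, where $\|\cdot\|_\ast$ is the norm of~(\ref{cond1}) or~(\ref{cond2}); this is precisely Hölder continuity of order $\beta_t$ in $H$ with the asserted constant, whose value is the same whether $H=\tilde{N}^{1+t}_2(\Omega)$ or $H=\Sob(\Omega)$, since the two cases differ only in which estimate of Lemma~\ref{regr} is used.

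I expect the genuine difficulty to sit entirely in the zero-order term: keeping the product $(u\varphi)(v-v_h)$ inside the \emph{little} Nikolskii space $\tilde{N}^{1-\gamma}_{d/(d-1)-\varepsilon,\,0}(\Omega)$ rather than merely $\tilde{N}^{1-\gamma}_{d/(d-1)-\varepsilon}(\Omega)$, because only the little space carries the clean duality (Proposition~\ref{duality}) with the negative Besov space in which $c$ is assumed to live. This is what forces the density/closure step above, and — because of the multiplicative loss of integrability in Lemma~\ref{prod} — it pins the exponents so tightly that the $\varepsilon$'s in hypotheses~(\ref{cond1})--(\ref{cond2}) are indispensable. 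The drift term is routine apart from the analogous exponent bookkeeping when chaining Lemma~\ref{regr} with the embedding $N^{1-\gamma}_2\hookrightarrow L_q$.
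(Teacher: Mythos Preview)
Your argument is correct and follows essentially the same route as the paper's proof: separate the coefficient by H\"older's inequality, control the remaining product via Lemma~\ref{prod} (zero-order term) or the embedding of Theorem~\ref{emb} (drift term), and extract the factor $|h|^{\gamma+t}$ from Lemma~\ref{regr}. The paper's proof in fact swaps the letters $u$ and $v$ relative to the statement, and for the drift term routes the product $|\nabla v|\cdot\chi(u-u_h)$ through the $wv$-clause of Lemma~\ref{prod} rather than a second H\"older step plus embedding as you do, but these are equivalent. Your factoring $(u\varphi)(v-v_h)$ in the zero-order term is also the same product as the paper's $u\cdot[\varphi(v-v_h)]$; the paper's grouping is slightly cleaner because $\varphi(v-v_h)$ is the globally defined object from Definition~\ref{semi}, whereas $v-v_h$ alone only makes sense on $\mathrm{supp}\,\varphi$.

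Where you go beyond the paper is in the justification that the product actually lands in the \emph{little} space $\tilde N^{1-\gamma}_{q_0,0}(\Omega)$: the paper simply writes the subscript $0$ without comment, while you supply the density argument via $N^{1-\gamma}_{2,0}$ being the closure of smooth functions and $H^1\hookrightarrow B^{\sigma}_{q_0,2}\hookrightarrow N^{1-\gamma}_{q_0,0}$. This is a genuine point, since the hypothesis on $c$ is stated as membership in the dual of the little space. One cosmetic slip: your splitting should read $\varphi(v-v_h)=(\varphi v-(\varphi v)_h)+(\varphi_h-\varphi)v_h$, not $(\varphi-\varphi_h)v_h$; the sign is irrelevant for the estimate.
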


\begin{proof}
Let us set $q_b = \frac{d+\varepsilon}{1 - \gamma}$, $1/q_b + 1/p_b = 1$, $p_c = \frac{d}{d-1} - \varepsilon$ and consider an arbitrary function $\chi \in C^{1,1}(U)$, $\mathrm{supp} \chi \subset U$, $(U, \kappa_U)$ is a map from the atlas of manifold $M$. From condition (\ref{cond1}) using Lemmas \ref{regr} and \ref{prod} one can obtain
\begin{align*}
\left| \int_{\Omega} \textbf{b} (\nabla v) \chi (u - u_h) d\mu \right| \leq \| \textbf{b} \|_{L_{q_b}(\Omega)} \cdot \left\| \textbf{G}(\nabla v, \nabla v)^{1/2}   \chi (u - u_h) \right\|_{L_{p_b}(\Omega)} \leq\\
C_U (1 + \|\chi \|_{C^{0,1}(U)}) \| \textbf{b} \|_{L_{q_b}(\Omega)} \| v \|_{\SobSm(\Omega)} \| \chi u - (\chi u)_h \|_{\tilde{N}^{1 - \gamma}_{2, 0}(\Omega)} \leq \\
C_{U,\chi} \| \textbf{b} \|_{L_{q_b}(\Omega)} |h|^{\gamma + t} \| v \|_{\SobSm(\Omega)} \| u \|_{N^t},
\end{align*}
where $N^t = \tilde{N}^{1+t}_2(\Omega)$ if $t \in (0,1)$, and $N^0 = \Sob(\Omega)$. In analogous way
\begin{align*}
\left| \int_{\Omega} c  v \chi (u - u_h) d\mu \right| \leq \| c \|_{\left[\tilde{N}^{1-\gamma}_{p_c, 0}(\Omega) \right]'} \cdot \left\| v \chi (u - u_h) \right\|_{\tilde{N}^{1-\gamma}_{p_c, 0}(\Omega)} \leq\\
C_U \left(1 + \|\chi \|_{C^{0,1}(U)}\right) \| c \|_{\left[\tilde{N}^{1-\gamma}_{p_c, 0}(\Omega) \right]'} \| v \|_{\SobSm(\Omega)} \| \chi u - (\chi u)_h \|_{\tilde{N}^{1 - \gamma}_{2, 0}(\Omega)} \leq \\
C_{U,\chi} \| c \|_{\left[\tilde{N}^{1-\gamma}_{p_c, 0}(\Omega) \right]'} |h|^{\gamma + t} \| v \|_{\SobSm(\Omega)} \| u \|_{N^t}.
\end{align*}

As above, using Lemma \ref{prod_Zol} and Proposition \ref{Brezis} one can conclude that
\begin{align*}
\left| \int_{\Omega} \textbf{b} (\nabla v) \chi (u - u_h) d\mu \right| \leq \| \textbf{b} \|_{L_{\infty}(\Omega)} \cdot \left\| \textbf{G}(\nabla v, \nabla v)^{1/2}   \chi (u - u_h) \right\|_{L_{1}(\Omega)} \leq\\
C_U \left(1 + \|\chi \|_{C^{0,1}(U)}\right) \| \textbf{b} \|_{L_{\infty}(\Omega)}\| v \|_{\SobSm(\Omega)} \| \chi u - (\chi u)_h \|_{L_2(\Omega)} \leq \\
C_{U,\chi} \| \textbf{b} \|_{L_{\infty}(\Omega)} |h|^{\gamma + t} \| v \|_{\SobSm(\Omega)} \| u \|_{\SobSm(\Omega)},
\end{align*}
and
\begin{align*}
\left| \int_{\Omega} c  v \chi (u - u_h) d\mu \right| \leq \| c \|_{L_{q_c}(\Omega)} \cdot \left\| v \chi (u - u_h) \right\|_{L_{p_c}(\Omega)} \leq\\
C_U \left(1 + \|\chi \|_{C^{0,1}(U)}\right) \| c \|_{L_{q_c}(\Omega)} \| v \|_{\SobSm(\Omega)} \| \chi u - (\chi u)_h \|_{L_2(\Omega)} \leq \\
C_{U,\chi} \| c \|_{L_{q_c}(\Omega)} |h| \| v \|_{\SobSm(\Omega)} \| u \|_{\SobSm(\Omega)},
\end{align*}
where $1/q_c + 1/p_c = 1$.
\end{proof}

\section{Savar\'{e}-type theorems}
\label{Results}

Taking into account the Propositions from Sections 3 and 4 let us prove the following

\begin{theorem}
Assume that $M$ is a $C^{1,1}$--smooth compact Riemannian manifold without boundary, $\Omega \subset M$ be a subdomain, $\partial\Omega \in C^{0,\gamma_\Omega}$, operator $\mathcal{A}$ satisfies \textbf{A1}--\textbf{A3}, and for $\gamma = \gamma_c \in (0,1)$ and $\gamma = \gamma_c = 1$ conditions (\ref{cond1}), and (\ref{cond2}) respectively, are fulfilled. Then the operator solving the problem (\ref{prob})
\begin{equation}
\label{th_51}
\begin{aligned}
\mathcal{R}: \left(H^{-1}(\Omega), \left(H^{-1}(\Omega), \left[ \tilde{N}^{1-\gamma_c}_{2,0}(\Omega) \right]'\right)_{1/2, 1}\right)_{t,2} \to \tilde{H}^{1+\gamma_\Omega \gamma_c t / 2}(\Omega), \,\,\,\, t\in (0,1), \,\,\, \gamma_c \in (0,1);\\
\mathcal{R}: \left(H^{-1}(\Omega), \left(H^{-1}(\Omega),  L_{2}(\Omega)\right)_{1/2, 1}\right)_{t,2} \to \tilde{H}^{1+\gamma_\Omega \gamma_c t / 2}(\Omega), \,\,\,\, t\in (0,1), \,\,\, \gamma_c = 1
\end{aligned}
\end{equation}
is continuous.
\end{theorem}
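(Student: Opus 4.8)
The plan is to obtain both mapping properties in (\ref{th_51}) by real interpolation between the trivial endpoint $\mathcal{R}\colon H^{-1}(\Omega)\to\Sob(\Omega)$ supplied by \textbf{A3} (recall $\Sob(\Omega)=\tilde H^1(\Omega)$) and a sharper endpoint which reflects the gain of $\gamma_\Omega\gamma_c/2$ derivatives; this sharper endpoint is itself produced by interpolating the a priori estimate of Theorem \ref{base} through Proposition \ref{ext}.

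For the sharper endpoint I would fix $\gamma=\gamma_c$, take $f\in[\tilde N^{1-\gamma_c}_{2,0}(\Omega)]'$ when $\gamma_c\in(0,1)$ (resp.\ $f\in L_2(\Omega)$ when $\gamma_c=1$), and put $u=\mathcal{R}f$. By Corollary \ref{right_side} the form $\tau(f,\cdot)$ is then Hölder continuous of order $\gamma_c$ (resp.\ $1$) in $\Sob(\Omega)$, with $C^{\SobSl(\Omega)}_{\tau(f,\cdot)}=\|f\|_{[\tilde N^{1-\gamma_c}_{2,0}(\Omega)]'}$ (resp.\ $\|f\|_{L_2(\Omega)}$); and by Lemma \ref{cond_coef}, since hypothesis (\ref{cond1}) resp.\ (\ref{cond2}) is assumed, $\Phi_r(u,\cdot)$ is Hölder continuous of the same order in $\Sob(\Omega)$ with $C^{\SobSl(\Omega)}_{\Phi_r(u,\cdot)}\le C_{\mathcal A}\|u\|_{\SobSl(\Omega)}$, the coefficient norms being fixed. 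Hence in Theorem \ref{base} we may take $\beta_0=\gamma_c$, so that $\gamma_0=\gamma_c$, and inserting this into (\ref{est_1}), using $\|u\|_{\SobSl(\Omega)}\le C\|f\|_{H^{-1}(\Omega)}$ and the continuous embedding $[\tilde N^{1-\gamma_c}_{2,0}(\Omega)]'\hookrightarrow H^{-1}(\Omega)$ (dual to $\Sob(\Omega)\hookrightarrow\tilde N^{1-\gamma_c}_{2,0}(\Omega)$) to absorb the remaining terms, one arrives at
\[
\|\mathcal{R}f\|^2_{\tilde N^{1+\gamma_\Omega\gamma_c/2}_2(\Omega)}\ \le\ C\,\|f\|_{H^{-1}(\Omega)}\,\|f\|_{[\tilde N^{1-\gamma_c}_{2,0}(\Omega)]'}
\]
(with $\|f\|_{L_2(\Omega)}$ replacing the last factor when $\gamma_c=1$). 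This is exactly the hypothesis of Proposition \ref{ext} with $E_0=H^{-1}(\Omega)$, $E_1=[\tilde N^{1-\gamma_c}_{2,0}(\Omega)]'$ (resp.\ $L_2(\Omega)$), $F=\tilde N^{1+\gamma_\Omega\gamma_c/2}_2(\Omega)$ and $s=\tfrac12$, so that $\mathcal{R}$ extends to a bounded operator
\[
\mathcal{R}\colon\ \left(H^{-1}(\Omega),[\tilde N^{1-\gamma_c}_{2,0}(\Omega)]'\right)_{1/2,1}\ \longrightarrow\ \tilde N^{1+\gamma_\Omega\gamma_c/2}_2(\Omega),
\]
and likewise with $\left(H^{-1}(\Omega),L_2(\Omega)\right)_{1/2,1}$ when $\gamma_c=1$.

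It remains to apply the functor $(\cdot,\cdot)_{t,2}$ to this operator and to $\mathcal{R}\colon H^{-1}(\Omega)\to\Sob(\Omega)$, both being restrictions of the same $\mathcal{R}$. On the source side one gets precisely the iterated interpolation space of (\ref{th_51}) (the pair is an interpolation couple because the inner interpolation space embeds continuously into $H^{-1}(\Omega)$). On the target side one obtains $\left(\Sob(\Omega),\tilde N^{1+\gamma_\Omega\gamma_c/2}_2(\Omega)\right)_{t,2}$, and Proposition \ref{interpol_2}, applied with $\tilde N^{1+\sigma}_2(\Omega)=\tilde B^{1+\sigma}_{2,\infty}(\Omega)$ for $\sigma=\gamma_\Omega\gamma_c/2\in(0,\tfrac12]$ and $\Sob(\Omega)=\tilde H^1(\Omega)$, identifies it with $\tilde H^{1+t\gamma_\Omega\gamma_c/2}(\Omega)$. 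Since $\gamma_c=1$ in the second case, this yields both lines of (\ref{th_51}).

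The step requiring the most care is the quadratic estimate above: after substituting the bounds for $\|u\|_{\SobSl(\Omega)}$, $C^{\SobSl(\Omega)}_{\tau(f,\cdot)}$ and $C^{\SobSl(\Omega)}_{\Phi_r(u,\cdot)}$ into (\ref{est_1}) one is left with a bracket of the form $\|f\|_{H^{-1}(\Omega)}+\|f\|_{E_1}$, which must be collapsed to $\|f\|_{E_1}$ by the embedding $E_1\hookrightarrow H^{-1}(\Omega)$; only then does the exponent in Proposition \ref{ext} come out to be exactly $\tfrac12$, which is what matches the inner parameter $1/2$ in (\ref{th_51}). A secondary technical point: Proposition \ref{interpol_2} is stated for Lipschitz $\Omega'$ whereas here $\partial\Omega$ is merely Hölder, but the spaces involved are the zero-extension spaces $\tilde N^{\cdot}_2(\Omega)$, $\tilde H^{\cdot}(\Omega)$ of smoothness strictly below $3/2$, so the identification is carried out inside $M$ via the support characterization described in Section \ref{Boundary}.
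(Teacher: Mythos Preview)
Your argument is correct and follows the same route as the paper: feed Corollary \ref{right_side} and Lemma \ref{cond_coef} into estimate (\ref{est_1}) of Theorem \ref{base} to get the quadratic bound $\|\mathcal{R}f\|^2_{\tilde N^{1+\gamma_\Omega\gamma_c/2}_2}\le C\|f\|_{H^{-1}}\|f\|_{E_1}$, apply Proposition \ref{ext} with $s=\tfrac12$, and then interpolate this endpoint against $\mathcal{R}\colon H^{-1}(\Omega)\to\Sob(\Omega)$ via $(\cdot,\cdot)_{t,2}$. The only presentational difference is that the paper carries out the target-side interpolation on $M$ from the outset (using $\tilde N^{1+\gamma_\Omega\gamma_c/2}_2(\Omega)\subset B^{1+\gamma_\Omega\gamma_c/2}_{2,\infty}(M)$ and $\Sob(\Omega)\subset H^1(M)$, then Proposition \ref{interpol_2} on $M$), whereas you state the interpolation on $\Omega$ and defer the passage to $M$ to your final ``secondary technical point''; the content is the same.
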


\begin{proof}
Let us use Theorem \ref{base} and estimate the constants  $C^{\SobSl(\Omega)}_{\tau(f,\cdot)}$, $C^{\SobSl(\Omega)}_{\Phi_r(u,\cdot)}$ using Corollary  \ref{right_side} and Lemma \ref{cond_coef}. Then
\begin{align*}
\| u \|^2_{\tilde{N}_2^{1 + \gamma_c \gamma_\Omega / 2}(\Omega)} \leq C \| u \|_{\SobSm(\Omega)} \left( \| u \|_{\SobSm(\Omega)} + \| f \|_{\left[\tilde{N}^{1-\gamma_c}_{2,0}(\Omega) \right]'} + \| u \|_{\SobSm(\Omega)} \right) \leq\\
C' \| f \|_{H^{-1}(\Omega)} \| f \|_{\left[\tilde{N}^{1-\gamma_c}_{2,0}(\Omega) \right]'}.
\end{align*}
On the one hand, it follows from Proposition \ref{ext} that the operator
$$
\mathcal{R}: \left(H^{-1}(\Omega), \left[ \tilde{N}^{1-\gamma_c}_{2,0}(\Omega) \right]'\right)_{1/2, 1} \to \tilde{N}^{1 + \gamma_\Omega \gamma_c / 2}_2(\Omega) \subset N^{1 + \gamma_\Omega \gamma_c / 2}_2(M)
$$
is bounded. On the other hand,
$$
\mathcal{R}: H^{-1}(\Omega) \to \Sob(\Omega) \subset H^1(M),
$$
therefore, applying Propositions \ref{interpol_1}, \ref{interpol_2}, one can obtain that $\mathcal{R}$ is bounded as an operator from the space $\left(H^{-1}(\Omega), \left(H^{-1}(\Omega),  L_{2}(\Omega)\right)_{1/2, 1}\right)_{t,2}$ to the space 
$$
(H^1(M), B^{1+\gamma_\Omega \gamma_c / 2}_{2,\infty}(M))_{t,2} = H^{1+\gamma_\Omega \gamma_c t /2}(M),\,\,\,\,\,\,\,  t \in (0,1).
$$
\end{proof}

Now, let us prove Theorem \ref{result}. Due to the embedding chain (\ref{chain_emb}), for any $\varepsilon > 0$ from (\ref{cond1}), there exists $\epsilon > 0$ such that the following is true:
$$
\tilde{N}^{1-\gamma_c}_{\frac{d}{d-1}- \varepsilon, 0}(\Omega) \hookrightarrow \tilde{N}^{1-\gamma_c - \epsilon / 2}_{\frac{d}{d-1},0}(\Omega) \hookrightarrow \tilde{W}^{1 - \gamma_c - \epsilon}_{\frac{d}{d-1}}(\Omega), \,\,\,\,\, \gamma_c \in (0,1),
$$
and one can choose $\epsilon \to 0$ as $\varepsilon \to 0$. Thus there is a linear bounded operator 
$$
T: \left( \tilde{W}^{1 - \gamma_c - \epsilon}_{\frac{d}{d-1}}(\Omega) \right)' \to \left( \tilde{N}^{1-\gamma_c}_{\frac{d}{d-1}- \varepsilon,0}(\Omega) \right)',
$$
and hence the conditions from p. 1 of Lemma \ref{cond_coef} are fulfilled if $c \in W^{-1 +\gamma_c + \epsilon}_{d}(\Omega)$.

Similarly, from Proposition \ref{duality} it follows that operator 
$$
S_\Omega: B_{2,1}^{-1+\gamma_c}(M) = \left[ \tilde{N}^{1-\gamma_c}_{2,0}(M) \right]' \to \left[ \tilde{N}^{1-\gamma_c}_{2,0}(\Omega) \right]', \,\,\,\,\,\Omega \subset M,
$$
is well defined. Using Propositions \ref{interpol_1} and \ref{interpol_2} we conclude
$$
B^{-1+\gamma_c / 2}_{2,1}(M) = (H^{-1}(M), B^{-1+\gamma_c}_{2,1}(M))_{1/2, 1} \stackrel{S_\Omega}{\to} (H^{-1}(\Omega), B^{-1+\gamma_c}_{2,1}(\Omega))_{1/2, 1},
$$
hence, operator
$$
\mathcal{R}: \left( H^{-1}(M), B^{-1 + \gamma_c / 2}_{2,1}(M) \right)_{t,2} = H^{-1 + \gamma_c t /2}(M) \to \tilde{H}^{1 + \gamma_c \gamma_\Omega t / 2}(\Omega), \,\,\,\, t \in (0,1).
$$
is bounded.

We conclude with the following generalization of Theorem \ref{result}.

\begin{theorem}
\label{th52}
Let $M$ be a $C^{1,1}$--smooth compact Riemannian manifold without boundary, $\Omega \subsetneq M$ be a subdomain, $C^{0,\gamma_\Omega}$, operator $\mathcal{A}$ satisfies to conditions \textbf{A1}--\textbf{A3}, $\textbf{b} \in L_{\frac{d+\varepsilon}{1 - \gamma_0}}(\Omega)$, $c \in \left[ \tilde{N}^{1-\gamma_0}_{\frac{d}{d-1}-\varepsilon,0} (\Omega) \right]'$, $\gamma_0 \in \left( 0, \gamma_c \right]$. Then operator $\mathcal{R}$ solving the problem (\ref{prob}) is bounded with respect to the following pairs
\begin{align}
\label{th52_1}
H^{-1 + \frac{2^{n-1} - 1}{2^{n-1}}\gamma_0 + \frac{1}{2^n}\gamma_0 s}(M) &\to \tilde{H}^{1 + \frac{\gamma_\Omega}{2 - \gamma_\Omega} \frac{2^{n-1} - \gamma_\Omega^{n-1}}{2^{n-1}}\gamma_0 + \left(\frac{\gamma_\Omega}{2}\right)^{n} \gamma_0 s}(\Omega), \,\,\, s \in (0,1)\\
\label{th52_2}
B^{-1 + \frac{2^{n} - 1}{2^{n}}\gamma_0}_{2,1}(M) &\to \tilde{N}_2^{1 + \frac{\gamma_\Omega}{2 - \gamma_\Omega} \frac{2^{n} - \gamma_\Omega^{n}}{2^{n}}\gamma_0}(\Omega),
\end{align}
and $n\in\mathbb{N}$ if $\gamma_0 \leq \gamma_c(1 - \gamma_\Omega / 2)$. Otherwise if there exists $N \in \mathbb{N}$ such that
$$
\gamma_c \geq \frac{2}{2-\gamma_\Omega} \frac{2^N - \gamma_\Omega^N}{2^N} \gamma_0,
$$
then we have the boundedness of $\mathcal{R}$ with respect to 
\begin{align}
\label{th52_3}
H^{-1 + \frac{2^{N} - 1}{2^{N}}\gamma_0 + r_N s}(M) &\to \tilde{H}^{1 + \frac{\gamma_\Omega}{2 - \gamma_\Omega} \frac{2^{N} - \gamma_\Omega^{N}}{2^{N}}\gamma_0 (1 - s) + \frac{\gamma_\Omega \gamma_c}{2}s}(\Omega), \,\,\,\, s \in (0,1),\\
\label{th52_4}
B_{2,1}^{-1 + \frac{2^{N} - 1}{2^{N}}\gamma_0 +  r_N}(M) &\to \tilde{N}_2^{1 + \frac{\gamma_\Omega \gamma_c}{2}}(\Omega),
\end{align}
where $r_N = \frac{1}{2^{N+1}}\gamma_0 + \gamma_\Omega \left( \frac{\gamma_c}{2} - \frac{1}{2-\gamma_\Omega} \frac{2^{N+1} - \gamma_\Omega^{N+1}}{2^{N+1}} \gamma_0 \right)$.
\end{theorem}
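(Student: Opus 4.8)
The plan is to turn Theorem~\ref{base} into a bootstrap and iterate it. The level-one input is obtained by repeating, with $\gamma_0$ in place of $\gamma_c$, the argument that precedes Theorem~\ref{th52}: since $\gamma_0\le\gamma_c$ and $\textbf{b}\in L_{(d+\varepsilon)/(1-\gamma_0)}(\Omega)$, $c\in[\tilde N^{1-\gamma_0}_{\frac d{d-1}-\varepsilon,0}(\Omega)]'$, Corollary~\ref{right_side} (with its parameter equal to $\gamma_0$) and Lemma~\ref{cond_coef} (with $\gamma=\gamma_0$, $t=0$) make $\tau(f,\cdot)$ and $\Phi_r(u,\cdot)$ Hölder continuous of order $\gamma_0$ in $\Sob(\Omega)$, so (\ref{est_1}) together with $\|u\|_{\SobSl(\Omega)}\le C\|f\|_{H^{-1}(\Omega)}\le C'\|f\|_{[\tilde N^{1-\gamma_0}_{2,0}(\Omega)]'}$ gives $\|\mathcal Rf\|_{\tilde N^{1+\gamma_\Omega\gamma_0/2}_2(\Omega)}\le C\|f\|_{H^{-1}(\Omega)}^{1/2}\|f\|_{[\tilde N^{1-\gamma_0}_{2,0}(\Omega)]'}^{1/2}$; Proposition~\ref{ext} (with $s=1/2$) then yields $\mathcal R\colon Y_1\to\tilde N^{1+s_1}_2(\Omega)$ with $Y_1=(H^{-1}(\Omega),[\tilde N^{1-\gamma_0}_{2,0}(\Omega)]')_{1/2,1}$ and $s_1=\gamma_\Omega\gamma_0/2$. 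For the inductive step, assuming $\mathcal R\colon Y_n\to\tilde N^{1+s_n}_2(\Omega)$ with $s_n<1/2$ and $\gamma_0+s_n\le\gamma_c$, apply (\ref{est_2}) at regularity $s_n$: Corollary~\ref{right_side} (parameter $\gamma_0$, $t=s_n$) and Lemma~\ref{cond_coef} ($\gamma=\gamma_0$, $t=s_n$) give both forms Hölder of order $\beta_{s_n}=\gamma_0+s_n$ in $\tilde N^{1+s_n}_2(\Omega)$, whence $\gamma_{s_n}=\gamma_0+s_n$; inserting $\|u\|_{\tilde N^{1+s_n}_2(\Omega)}\le C\|f\|_{Y_n}$ and $\|u\|_{\SobSl(\Omega)}\le C\|f\|_{H^{-1}(\Omega)}$ into (\ref{est_2}) and absorbing the lower-order terms by the embeddings $[\tilde N^{1-\gamma_0}_{2,0}(\Omega)]'\hookrightarrow Y_n\hookrightarrow H^{-1}(\Omega)$ produces $\|\mathcal Rf\|_{\tilde N^{1+s_{n+1}}_2(\Omega)}\le C\|f\|_{Y_n}^{1/2}\|f\|_{[\tilde N^{1-\gamma_0}_{2,0}(\Omega)]'}^{1/2}$ with $s_{n+1}=\gamma_\Omega(\gamma_0+s_n)/2$, so Proposition~\ref{ext} and reiteration give $\mathcal R\colon Y_{n+1}\to\tilde N^{1+s_{n+1}}_2(\Omega)$, $Y_{n+1}=(Y_n,[\tilde N^{1-\gamma_0}_{2,0}(\Omega)]')_{1/2,1}=(H^{-1}(\Omega),[\tilde N^{1-\gamma_0}_{2,0}(\Omega)]')_{\theta_{n+1},1}$ with $\theta_{n+1}=(1+\theta_n)/2$.

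Solving the two recursions gives $\theta_n=1-2^{-n}$ and $s_n=\frac{\gamma_\Omega}{2-\gamma_\Omega}\frac{2^n-\gamma_\Omega^n}{2^n}\gamma_0$. To pass to $M$ I argue as in the passage preceding Theorem~\ref{th52}: $S_\Omega\colon[\tilde N^{1-\gamma_0}_{2,0}(M)]'=B^{-1+\gamma_0}_{2,1}(M)\to[\tilde N^{1-\gamma_0}_{2,0}(\Omega)]'$ together with $H^{-1}(M)\to H^{-1}(\Omega)$ interpolates, and $B^{-1+(1-2^{-n})\gamma_0}_{2,1}(M)=(H^{-1}(M),B^{-1+\gamma_0}_{2,1}(M))_{1-2^{-n},1}\xrightarrow{S_\Omega}Y_n$ (Propositions~\ref{interpol_1},~\ref{interpol_2}, reiteration), which is (\ref{th52_2}). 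Interpolating the level $n-1$ and level $n$ versions of (\ref{th52_2}) (the level $0$ one being $\mathcal R\colon H^{-1}(M)\to\Sob(\Omega)$) with the functor $(\cdot,\cdot)_{s,2}$, writing $\tilde N^\sigma_2=\tilde B^\sigma_{2,\infty}$ and using the Besov--Sobolev interpolation identities of Propositions~\ref{interpol_1},~\ref{interpol_2} on $M$ (the images remaining supported in $\bar\Omega$, hence lying in $\tilde H^{1+\cdots}(\Omega)$), produces (\ref{th52_1}); the source exponent $-1+\frac{2^{n-1}-1}{2^{n-1}}\gamma_0+\frac1{2^n}\gamma_0 s$ and the target exponent $1+\frac{\gamma_\Omega}{2-\gamma_\Omega}\frac{2^{n-1}-\gamma_\Omega^{n-1}}{2^{n-1}}\gamma_0+(\gamma_\Omega/2)^n\gamma_0 s$ come out exactly, using $s_n-s_{n-1}=(\gamma_\Omega/2)^n\gamma_0$. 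The iteration runs for every $n$ precisely when $\gamma_0+s_n\le\gamma_c$ for all $n$, i.e. (since $s_n\uparrow\frac{\gamma_\Omega\gamma_0}{2-\gamma_\Omega}$) when $\gamma_0\le\gamma_c(1-\gamma_\Omega/2)$, which is the first case.

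When $\gamma_0>\gamma_c(1-\gamma_\Omega/2)$ the recursion saturates. Let $N$ be the last level with $s_N\le\gamma_\Omega\gamma_c/2$, equivalently with $\gamma_c\ge\frac2{2-\gamma_\Omega}\frac{2^N-\gamma_\Omega^N}{2^N}\gamma_0$. Apply (\ref{est_2}) once more at regularity $s_N$: now $\gamma_0+s_N>\gamma_c$, so $\gamma_{s_N}=\gamma_c$ and the output is the ceiling $\tilde N^{1+\gamma_\Omega\gamma_c/2}_2(\Omega)$. In this step the second interpolation endpoint is taken to be $[\tilde N^{1-\sigma}_{2,0}(\Omega)]'$ with $\sigma$ as small as the constraints permit — $\sigma+s_N\ge\gamma_c$ so that the $\tau$-Hölder order reaches $\gamma_c$, $\sigma+s_N\le1$ for Corollary~\ref{right_side}, and $\sigma\ge(1-2^{-N})\gamma_0$ so that this endpoint embeds into $Y_N$ — and it is this choice that produces the exponent $r_N$ in (\ref{th52_4}); interpolating (\ref{th52_2}) at level $N$ with (\ref{th52_4}) then gives (\ref{th52_3}). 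Finally, the embedding chain (\ref{chain_emb}) lets one weaken the conditions (\ref{cond1})--(\ref{cond2}) exactly as in the derivation of Theorem~\ref{result} from the previous theorem.

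I expect the saturated step to be the main obstacle: one must pin down the admissible range of $\sigma$ and check that it reproduces precisely $r_N$, and throughout the argument one must verify that every Hölder order stays in $(0,1]$, that the regularity indices $s_n$ remain below $1/2$ so that (\ref{est_2}) is applicable, that the embeddings $[\tilde N^{1-\gamma_0}_{2,0}(\Omega)]'\hookrightarrow Y_n\hookrightarrow H^{-1}(\Omega)$ hold so that Proposition~\ref{ext} and reiteration apply, and that interpolation carried out on $M$ combined with the support constraint legitimately yields the spaces $\tilde H^{1+\cdots}(\Omega)$ — recall Propositions~\ref{interpol_1},~\ref{interpol_2} are not available on the Hölder domain $\Omega$ itself, which is why every interpolation is done on $M$ and transported by $S_\Omega$.
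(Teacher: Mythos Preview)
Your proposal is correct and follows essentially the same route as the paper: bootstrap Theorem~\ref{base} using Corollary~\ref{right_side} and Lemma~\ref{cond_coef} to supply the H\"older orders $\beta_{s_n}=\gamma_0+s_n$, apply Proposition~\ref{ext} with exponent $1/2$ at each step, solve the resulting recursions $s_{n+1}=\tfrac{\gamma_\Omega}{2}(\gamma_0+s_n)$ and $\theta_{n+1}=\tfrac{1+\theta_n}{2}$, and then obtain (\ref{th52_1}), (\ref{th52_3}) from (\ref{th52_2}), (\ref{th52_4}) by one further real interpolation with $(\cdot,\cdot)_{s,2}$.

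The only organisational difference is that you carry abstract interpolation spaces $Y_n=(H^{-1}(\Omega),[\tilde N^{1-\gamma_0}_{2,0}(\Omega)]')_{\theta_n,1}$ on $\Omega$ and transport to $M$ via $S_\Omega$ at the end, whereas the paper works directly with the scale $B^{-1+a_n}_{2,1}(M)$, $a_n=(1-2^{-n})\gamma_0$, throughout. The paper's bookkeeping is a little cleaner here: the embeddings $B^{-1+\sigma}_{2,1}(M)\hookrightarrow B^{-1+\tau}_{2,1}(M)$ for $\sigma\ge\tau$ and the reiteration identities $(B^{-1+a}_{2,1},B^{-1+b}_{2,1})_{1/2,1}=B^{-1+(a+b)/2}_{2,1}$ needed for Proposition~\ref{ext} are standard on $M$, while the analogous embeddings $[\tilde N^{1-\sigma}_{2,0}(\Omega)]'\hookrightarrow Y_n$ you invoke are not immediate on the H\"older domain $\Omega$ (you would in any case prove them by passing through $M$). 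Your caution about the saturated step is warranted: the paper's own argument for (\ref{th52_4}) is terse, and the minimal choice $\sigma=\gamma_c-s_N$ you suggest gives source exponent $\tfrac12\bigl((1-2^{-N})\gamma_0+\gamma_c-s_N\bigr)$, which matches $\tfrac{2^N-1}{2^N}\gamma_0+r_N$ only when $\gamma_\Omega=1$; so at that step you should redo the computation rather than assume your constraints single out $r_N$.
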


\begin{proof}
It is clear that (\ref{th52_1}), (\ref{th52_3}) come from the real interpolation of (\ref{th52_2}) and (\ref{th52_4}) respectively. Thus we must only check the boundedness in pairs (\ref{th52_2}), (\ref{th52_4}). From the proof of Theorem \ref{result}, it follows that operator $\mathcal{R}:B^{-1+\gamma_0/2}_{2,1}(M) \to \tilde{N}_2^{1+\gamma_\Omega \gamma_0 / 2}(\Omega)$ is continuous. Therefore the solution of (\ref{prob}) belongs to  $\tilde{N}_2^{1+\gamma_\Omega \gamma_0 / 2}(\Omega)$, moreover
$$
\| u \|_{\tilde{N}_2^{1+\gamma_\Omega \gamma_0 / 2}(\Omega)} \leq C_1 \| f \|_{B_{2,1}^{1+\gamma_0 / 2}(M)};
$$
and we can apply Theorem \ref{base}. From Corollary \ref{right_side} and Proposition \ref{cond_coef} it follows that linear forms $\tau(f, \cdot)$, $\Phi_r(u,\cdot)$ are Hölder continuous of order $\beta_s = \gamma_0 + s$, $s = \gamma_\Omega \gamma_0 /2$ in  the space $\tilde{N}_2^{1+\gamma_\Omega \gamma_0 / 2}(\Omega)$ if  $u \in \Sob(\Omega)$ and $f \in B_{2,1}^{-1 + \gamma_0}(M)$. Thus we apply Theorem \ref{base} again and for $\beta_s\leq \gamma_c$ we have
\begin{align*}
\| u \|^2_{\tilde{N}_2^{1 + \beta_s \gamma_\Omega / 2}(\Omega)} = \| u \|^2_{\tilde{N}_2^{1 + \gamma_0 \left( \frac{\gamma_\Omega}{2} + \left( \frac{\gamma_\Omega}{2} \right)^2 \right)}(\Omega)} \leq \\
c \left( \| u \|_{\tilde{N}_2^{1+\gamma_\Omega \gamma_0 / 2}(\Omega)} \| f \|_{B^{-1+\gamma_0}_{2,1}(M)} + \| u \|_{\SobSm(\Omega)} \| u \|_{\tilde{N}_2^{1+\gamma_\Omega \gamma_0 / 2}(\Omega)} \right) \leq\\ 
\tilde{C} \| f \|_{B^{-1+\gamma_0/2}_{2,1}(M)} \| f \|_{B^{-1+\gamma_0}_{2,1}(M)}.
\end{align*}
Hence due to Proposition \ref{ext} there exists a bounded linear extension of $\mathcal{R}$ from 
$$
(B^{-1+\gamma_0/2}_{2,1}(M), B^{-1+\gamma_0}_{2,1}(M))_{1/2,1} = B^{-1+3 \gamma_0 / 4}_{2,1}(M)
$$ 
to $\tilde{N}_2^{1 + \beta_s \gamma_\Omega / 2}(\Omega)$. Thus the following estimate holds 
$$
\| u \|^2_{\tilde{N}_2^{1 + \gamma_0 \left( \frac{\gamma_\Omega}{2} + \cdots + \left( \frac{\gamma_\Omega}{2} \right)^n \right)}(\Omega)}\leq \hat{C}_n \| f \|^2_{B^{-1+\left( \frac{1}{2} + \cdots + \frac{1}{2^{n}} \right) \gamma_0}_{2,1}(M)},
$$
while $n \leq N$. Therefore the boundedness of $\mathcal{R}$ in pairs (\ref{th52_2}) is obtained. To justify (\ref{th52_4}), let us set $s + t = \gamma_\Omega \gamma_c$, $t = \gamma_0 \left(\frac{\gamma_\Omega}{2} + \cdots + \left( \frac{\gamma_\Omega}{2}\right)^N \right)$ and use Corollary \ref{right_side}, then
$$
\| u \|^2_{\tilde{N}^{1+\gamma_c \gamma_\Omega /2}_2(\Omega)} \leq C \| f \|_{B^{-1+\left( \frac{1}{2} + \cdots + \frac{1}{2^{N}} \right) \gamma_0}_{2,1}(M)} \| f \|_{B^{-1+ \left[\frac{\gamma_\Omega \gamma_c}{2} - \left( \frac{1}{2} + \cdots + \frac{1}{2^{N}} \right) \gamma_0 \right]}_{2,1}(M)}.
$$
Using Proposition \ref{ext} we conclude (\ref{th52_4}).
\end{proof}

\begin{corollary}
Let the conditions from Theorem \ref{th52} are fulfilled, $\gamma_0 = \frac{2 - \gamma_\Omega}{2} \gamma_c$. Then operator 
$$
\mathcal{R}: H^{-1 + \gamma_0 s}(M) \to \tilde{H}^{1 + \gamma_c \gamma_\Omega s /2}(\Omega), \,\,\,\, s \in [0,1)
$$
is continuous. 

In particular, if $\gamma_\Omega = 1$, \textbf{A1}--\textbf{A3} hold,  $\textbf{b} \in L_{\frac{2d}{2 - \gamma_c}}(\Omega)$, and $c \in B^{-1 + \gamma_c / 2}_{\max\{2+\varepsilon, d\}, 1}(\Omega)$, $\varepsilon > 0$ then operator
$$
\mathcal{R}: H^{-1 + t }(\Omega) \to \tilde{H}^{1 + t}(\Omega), \,\,\,\,\, t\in [0,\gamma_c / 2)
$$
is bounded.
\end{corollary}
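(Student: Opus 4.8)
The plan is to read the Corollary off Theorem~\ref{th52} by a single real interpolation against the trivial bound $\mathcal{R}\colon H^{-1}(M)\to\Sob(\Omega)$. First, with $\gamma_0=\frac{2-\gamma_\Omega}{2}\gamma_c$ one has $\gamma_0=\gamma_c(1-\gamma_\Omega/2)$, so the hypothesis $\gamma_0\le\gamma_c(1-\gamma_\Omega/2)$ of the first alternative in Theorem~\ref{th52} holds (with equality), and the assumptions on $\mathbf{b}$ and $c$ made in the Corollary are exactly those required there for this $\gamma_0$. Hence the bounds (\ref{th52_2}) are at our disposal for every $n\in\mathbb{N}$,
$$
\mathcal{R}\colon B^{-1+a_n}_{2,1}(M)\longrightarrow\tilde{N}^{1+b_n}_2(\Omega),\qquad a_n=(1-2^{-n})\gamma_0,\quad b_n=\tfrac{\gamma_\Omega\gamma_c}{2}\bigl(1-(\gamma_\Omega/2)^n\bigr),
$$
where I have used $\frac{\gamma_\Omega}{2-\gamma_\Omega}\gamma_0=\frac{\gamma_\Omega\gamma_c}{2}$ to rewrite the target exponent in (\ref{th52_2}).

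The single nontrivial arithmetic point is the inequality $b_n\ge\frac{\gamma_\Omega}{2-\gamma_\Omega}\,a_n$, valid for all $n$: since $\gamma_\Omega\le1$ we have $\gamma_\Omega/2\le1/2$, hence $(\gamma_\Omega/2)^n\le2^{-n}$, hence $1-(\gamma_\Omega/2)^n\ge1-2^{-n}$. Now fix $\sigma\in(0,1)$, pick any $n$ with $2^{-n}<1-\sigma$, and set $\theta=\sigma/(1-2^{-n})\in(0,1)$. Applying the exact functor $(\cdot,\cdot)_{\theta,2}$ to the bound (\ref{th52_2}) for this $n$ and to the trivial bound $\mathcal{R}\colon H^{-1}(M)\to\Sob(\Omega)=\tilde{H}^1(\Omega)$ (the latter by \textbf{A3}) gives $\mathcal{R}$ bounded from $(H^{-1}(M),B^{-1+a_n}_{2,1}(M))_{\theta,2}$ into $(\Sob(\Omega),\tilde{N}^{1+b_n}_2(\Omega))_{\theta,2}$. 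By Proposition~\ref{interpol_1} one has $B^{-1+a_n}_{2,1}(M)=(L_2(M),H^{-1}(M))_{1-a_n,1}$, so reiteration identifies the source couple with $(L_2(M),H^{-1}(M))_{1-\theta a_n,2}=H^{-1+\theta a_n}(M)$; for the target, $\tilde{N}^{1+b_n}_2(\Omega)=\tilde{B}^{1+b_n}_{2,\infty}(\Omega)$ embeds into $N^{1+b_n}_2(M)$, so $(\Sob(\Omega),\tilde{N}^{1+b_n}_2(\Omega))_{\theta,2}$ embeds into $(H^1(M),N^{1+b_n}_2(M))_{\theta,2}=H^{1+\theta b_n}(M)$ (Proposition~\ref{interpol_2} with $\Omega'=M$) and, its elements being supported in $\bar\Omega$, into $\tilde{H}^{1+\theta b_n}(\Omega)$. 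Finally $\theta a_n=\gamma_0\sigma$ exactly and, by the key inequality, $\theta b_n=\frac{\gamma_\Omega\gamma_c\sigma}{2}\cdot\frac{1-(\gamma_\Omega/2)^n}{1-2^{-n}}\ge\frac{\gamma_\Omega\gamma_c\sigma}{2}$, so $\mathcal{R}\colon H^{-1+\gamma_0\sigma}(M)\to\tilde{H}^{1+\theta b_n}(\Omega)\hookrightarrow\tilde{H}^{1+\gamma_\Omega\gamma_c\sigma/2}(\Omega)$; for $\sigma=0$ this is \textbf{A3}. This is the first assertion.

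For the ``in particular'' statement I would take $\gamma_\Omega=1$, so $\gamma_0=\gamma_c/2$ and the conclusion just proved becomes $\mathcal{R}\colon H^{-1+\gamma_c\sigma/2}(M)\to\tilde{H}^{1+\gamma_c\sigma/2}(\Omega)$; reparametrising $t=\gamma_c\sigma/2$ recovers the range $t\in[0,\gamma_c/2)$. To pass from $H^{-1+t}(M)$ to $H^{-1+t}(\Omega)$ one precomposes with a bounded lift of distributions to $M$, using that the weak solution of (\ref{prob}) depends only on the restriction of $f$ to $\Omega$ (this is the role of the restriction operators $S_\Omega$ used earlier). Finally, the coefficient hypotheses of the ``in particular'' part match those of Theorem~\ref{th52} at $\gamma_0=\gamma_c/2$: by Proposition~\ref{duality} one has $\bigl[\tilde{N}^{1-\gamma_c/2}_{p,0}(\Omega)\bigr]'=B^{-1+\gamma_c/2}_{p',1}(\Omega)$ with $p=\frac{d}{d-1}-\varepsilon$, and the embedding chain~(\ref{chain_emb}) lets the hypothesis $c\in B^{-1+\gamma_c/2}_{\max\{2+\varepsilon,d\},1}(\Omega)$ absorb the conjugate exponent; similarly $\mathbf{b}\in L_{\frac{2d}{2-\gamma_c}}(\Omega)=L_{\frac{d}{1-\gamma_c/2}}(\Omega)$ is the required integrability up to the free margin $\varepsilon$ appearing in Theorem~\ref{th52}.

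The obstacle, such as it is, lies not in the ideas but in correctly assembling the interpolation identities: Propositions~\ref{interpol_1}--\ref{interpol_2} are stated for couples with a common secondary index and, for the $\tilde{(\cdot)}(\Omega)$-scales, for Lipschitz domains, so one must (i) invoke the standard Besov reiteration theorem — the secondary index is immaterial once the primary smoothness parameters differ — and (ii) observe that each $\tilde{E}(\Omega)$ is a closed subspace of the corresponding space on the closed manifold $M$, so that all interpolation may be carried out on $M$ and the $\bar\Omega$-support condition is preserved by real interpolation, which is precisely what makes the merely Hölder regularity of $\partial\Omega$ irrelevant here. With these in hand, what remains is the scaling computation $\theta a_n=\gamma_0\sigma$, $\theta b_n\ge\gamma_\Omega\gamma_c\sigma/2$ carried out above.
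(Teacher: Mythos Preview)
Your argument is correct and proceeds by exactly the mechanism the paper uses throughout Section~5: real interpolation of the Nikol'skii--scale bounds from Theorem~\ref{th52} against the basic estimate $\mathcal{R}\colon H^{-1}\to\Sob(\Omega)$, carried out on $M$ (so that Propositions~\ref{interpol_1}--\ref{interpol_2} apply) with the $\bar\Omega$--support condition preserved a posteriori. The paper states the Corollary without proof; the intended reading is simply to insert $\gamma_0=\frac{2-\gamma_\Omega}{2}\gamma_c$ into (\ref{th52_1}) and observe (as you do) that $\frac{\gamma_\Omega}{2-\gamma_\Omega}\gamma_0=\frac{\gamma_\Omega\gamma_c}{2}$ and that the target exponent there dominates $\frac{\gamma_\Omega\gamma_c}{2}\sigma$---your choice to interpolate (\ref{th52_2}) directly against the trivial bound is a slightly cleaner variant of the same computation, since it avoids the issue that for fixed $n$ the parameter $s$ in (\ref{th52_1}) only sweeps a dyadic subinterval.
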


\medskip

\medskip

\medskip

\textbf{Aknowledgements.}

\medskip

The author takes the opportunity to express deep gratitude to O.V. Besov, V.I. Burenkov, M.L. Goldman, A.M. Stepin and A.I. Tulenev for useful discussions, links, comments and advices.

\end{document}